\newtheorem{theorem}{\textbf{Theorem}}
\newtheorem{lemma}{\textbf{Lemma}} [section]
\newtheorem{ass}{\textbf{Assumption}}
\newtheorem{remark}{\textbf{Remark}}[section]
\begin{document}

\begin{frontmatter}
\title{ Central Limit Theorems of Local Polynomial Threshold Estimators for Diffusion Processes with Jumps}
\runtitle{Central Limit Theorems of Local Polynomial Threshold Estimators}

\begin{aug}
\author{\fnms{Yuping} \snm{Song}}

\address{ School of Finance and Business, Shanghai Normal
         University, Shanghai,   200234,  P.R.C.}

\author{\fnms{Hanchao} \snm{Wang}\thanksref{t2}\ead[label=e3]{wanghanchao@sdu.edu.cn}
\ead[label=u1,url]{www.foo.com}}

\address{Zhongtai Securities Institute for Financial Studies,  Shandong University,  Jinan,  250100, P.R.C.\\
\printead{e3} }

\thankstext{t1}{This research work is support by the National Natural Science Foundation of China (No. 11371317, 11171303)
and the Fundamental Research Fund of Shandong University (No.
2016GN019). }
\thankstext{t2}{Corresponding author}

\runauthor{Y. Song et al.}

\end{aug}

\begin{abstract}
Central limit theorems play an important role in the study of
statistical inference for stochastic processes. However, when the
nonparametric local polynomial threshold estimator, especially local
linear case, is employed to estimate the diffusion coefficients of
diffusion processes, the adaptive and predictable structure of the
estimator conditionally on the $\sigma-$field generated by diffusion
processes is destroyed, the classical central limit theorem for
martingale difference sequences can not work. In this paper, we proved the central limit theorems of local polynomial
threshold estimators for the volatility function in diffusion
processes with jumps. We
believe that our proof for local polynomial threshold estimators
provides a new method in this fields, especially local linear case.
\end{abstract}

\begin{keyword}[class=MSC]
\kwd[Primary ]{62M10} \kwd{62G20} \kwd[; secondary ]{60G08}
\end{keyword}

\begin{keyword}
Central limit theorem, Jacod's stable convergence theorem, diffusion
processes with finite or infinite activity jumps, local polynomial
threshold estimation
\end{keyword}

\end{frontmatter}

\section{Introduction}

Volatility is an important feature of financial markets, which is
directly related to market uncertainty and risk. It is not only an
effective indicator of quality and efficiency for financial market,
but also a core variable for portfolio theory, asset price modeling,
arbitrage price modeling and option price formula. Hence, how to
effectively describe the dynamic behavior of volatility for
financial market has always been the core problem.

Estimating the price volatility of financial assets exactly is
fundamental for the financial risk management, which has long been
the focus of the theoretical study and empirical application such as
risk management, asset pricing, proprietary trading and portfolio
managements. In finance, ``tick data'' are recorded at every
transaction time (sampled at very high frequency), so we do get huge
amounts of data on the prices or return rates of various assets and
so on. In this context, it gives a new challenge to study the
estimators for the process, which characterizes the prices or
returns of various assets and so on. With the development of
financial statistical methods, using real-time transaction data to
estimate asset return volatility has become a hot topic. In high
frequency context, more and more statisticians and economists are
interested in the nonparametric inference for diffusion coefficients
of stochastic processes which characterize the dynamics of option
prices, interest rates, exchange rates and inter alia.

In this paper, we assume that all processes are defined on a
filtered probability space $(\Omega, (\mathscr{F}_{t})_{t\in[0,T]},
\mathscr{F}, P)$, satisfying the usual conditions (Jacod and
Shiryaev \cite{j}). A diffusion processes can be represented by the
solution of following stochastic differential equation:
\begin{equation}
\label{dif} dX_{t}=\mu(X_{t})dt+\sigma(X_{t})dW_{t}\end{equation}
where $W_{t}$ is a standard Brownian motion. Diffusion processes
play an important role in the study of mathematical financial.
Especially, many models in economics and finance, like those for an
interest rate or an asset price, involve
 diffusion processes. Assuming that the process (\ref{dif}) is observed at $n+1$ discrete
time observations $\{X_{0}, X_{t_{1}}, ... , X_{t_{n-1}},
X_{t_{n}}\}$ with $t_{i} - t_{i-1} = \delta$, ~for $i = 1, ... , n$
and $t_{n} = T$, based on the infinitesimal conditional moment
restriction
\begin{equation}
\label{d2}
\lim_{\delta\rightarrow0}E[\frac{(X_{t+\delta}-X_{t})^2}{\delta}|X_{t}=x]=\sigma^2(x),\end{equation}
the nonparametric estimators for volatility function $\sigma^2(x)$
can be constructed by the nonparametric regression method. The
Nadaraya-Watson estimators is a natural choice, we can estimate
$\sigma^2(x)$ through
\begin{equation}
\label{n2}
\breve{\sigma}^2_n(x)=\frac{\sum_{i=1}^{n}K(\frac{X_{t_{i-1}}-x}{h_n})\frac{(X_{t_{i}}-X_{t_{i-1}})^2}{\delta}}
{\sum_{i=1}^{n}K(\frac{X_{t_{i-1}}-x}{h_n})},
\end{equation}
where $K(\cdot)$ is a kernel function, and $h_n$ is bandwidth. Bandi
and Phillips \cite{bp}  obtained the central limit theorems for
$\breve{\sigma}^2_n(x)-\sigma^2(x)$ by Knight's embedding theorem (
Revuz and Yor \cite{ry}). There are many methods to improve the
statistical behaviors of $\breve{\sigma}^2_n(x)$.  Local polynomial
smoothing method is a popular method for improving Nadaraya-Watson
estimator. Fan and Zhang (\cite{fz}) first estimated $\sigma^2(x)$
through local polynomial method, however, the asymptotic normality
was not obtained by them (they only computed the bias and variance
for the estimator of the diffusion coefficient).

Recently, diffusion processes with jumps as an intension of
continuous-time ones have been studied by more and more
statisticians and economists since the financial phenomena can be
better characterized (see Johannes \cite{joh}, A\"it-Sahalia and
Jacod \cite{aj}, Bandi and Nguyen \cite{bn}). It is natural to
consider the following model:
\begin{equation}
\label{om}
dX_{t}=\mu(X_{t-})dt+\sigma(X_{t-})dW_{t}+dJ_{t},\end{equation}
where $J_{t}$ is a pure jump semimartingale. The jumps $J_{t} =
J_{1t} + \tilde{J}_{2t}$ consist of large and infrequent jump
component $J_{1t}$ (finite activity) as well as small and frequent
jump component with finite variation $\tilde{J}_{2t}$ (infinite
activity). Ordinarily, $J_{1t}$ is assumed compound Poisson
processes and $\tilde{J}_{2t}$ is assumed to be L\'{e}vy. One can
refer to Bandi and Nguyen \cite{bn} for doubly stochastic compound
Poisson process, Madan \cite{mad} for Variance Gamma process, Carr
et al. \cite{car} for the CGMY model with $Y < 1$, Cont and Tandov
\cite{ct} for $\alpha-$stable or tempered stable process with
$\alpha < 1$. Disentangling the jump component from observations is
essential for risk management, one can refer to Andersen et al.
\cite{and} and Corsi et al. \cite{cor}. In presence of jump
component $J_{1t},$ Bandi and Nguyen \cite{bn}, Johannes \cite{joh}
constructed nonparametric estimation for $\sigma^2(x)$ based on
estimation of infinitesimal moments and provided central limiting
theory.

Under the influence of jumps, especially $\tilde{J}_{2t}$, how to
estimate $\sigma^2(x)$ is an interesting problem. For finite
activity case, Mancini \cite{m1} showed that due to the continuity
modulus of the Brownian motion paths, it is possible to disentangle
in which intervals jumps occur when the interval between two
observations shrinks for $T < \infty$. This property allows one to
identify the jump component asymptotically and remove it from X.
Mancini and Ren\`{o} \cite{mr} showed that this methodology is
robust to enlarging time span ($T \rightarrow \infty$) and to the
presence of infinite activity jumps $\tilde{J}_{2t}$. For more
knowledge of this aspect, one also can refer to Barndorff-Nielsen
and Shephard \cite{bs}, Mancini \cite{m0}, A\"it-Sahalia and  Jacod
\cite{aj} for alternative approaches to disentangle jumps from
diffusion based on power and multipower variation.

Mancini and Ren\`{o} \cite{mr} combined the Nadaraya-Watson
estimator and threshold  method to eliminate the impact of jumps.
They estimated $\sigma^2(x)$ through
\begin{equation}
\label{t2}
\bar{\sigma}^2_n(x)=\frac{\sum_{i=1}^{n}K(\frac{X_{t_{i-1}}-x}{h_n})\frac{(X_{t_{i}}-X_{t_{i-1}})^2}{\delta}I_{\{(\Delta_{i}X)^2
\leq \vartheta(\delta)\}}}
{\sum_{i=1}^{n}K(\frac{X_{t_{i-1}}-x}{h_n})}.
\end{equation}
In the context of nonparametric estimator with finite-dimensional
auxiliary variables, local polynomial smoothing become the ``golden
standard'', see Fan \cite{fan92}, Wand and Jones \cite{wj}. The
local polynomial estimator is known to share the simplicity and
consistency of the kernel estimators such as Nadaraya-Watson or
Gasser-M\"{u}ller estimators. Moreover, when the convergence rates
are concerned, local polynomial estimator possesses simple bias
representation and corrects the boundary bias automatically.
However, when the nonparametric local polynomial threshold estimator
is employed to estimate volatility function $\sigma^{2}(x)$ for
better bias properties instead of Nadaraya Watson estimator, the
adaptive and predictable structure of estimator is destroyed, so the
classical central limit theorem for martingale difference sequences
can not work. In this paper, we will discuss this problem and prove
the central limit theorem for local linear threshold estimator for
the diffusion coefficient $\sigma^2(x).$

The remainder of this paper is organized as follows. Stable
convergence and its property is shown in section 2. Section 3
introduces our model, local polynomial threshold estimator and main
results. The proofs of the results will be collected in section 4.

\section{Stable convergence and its property}

In this section, firstly, we will define the stable convergence in
law and mention its property, secondly, we will show limit theorem
for partial sums of triangular arrays of random variables, one can
refer to Jacod and Shiryaev \cite{j} or Jacod \cite{jac} for more
details.

\noindent {\textbf{1) Stable convergence in law.}}

This notation was firstly introduced by R\'{e}nyi \cite{ren}, which
in the same reason we need here for the proof, and exposited by
Aldous and Eagleson \cite{ald}.

A sequence of random variables $Z_{n}$ defined on the probability
space $(\Omega, \mathcal {F}, \mathbb{P}),$ taking their values in
the state space $(E, \mathcal {E}),$ assumed to be Polish. We say
that $Z_{n}$ stably converges in law if there is a probability
measure $\eta$ on the product $(\Omega \times E, \mathcal {F} \times
\mathcal {E}),$ such that $\eta(A \times E) = \mathbb{P}(A)$ for all
$A \in \mathcal {F}$ and
\begin{equation}
\label{def1} \mathbb{E}(Y f(Z_{n})) \longrightarrow \int{Y(\omega)
f(x) \eta(d\omega, dx)}
\end{equation}
for all bounded continuous functions $f$ on $E$ and bounded random
variables $Y$ on $(\Omega, \mathcal {F}).$

Take $\tilde{\Omega} = \Omega \times E$, $\tilde{\mathcal {F}} =
\mathcal {F} \times \mathcal {E}$ and endow $(\tilde{\Omega},
\tilde{\mathcal {F}})$ with the probability $\eta,$ and put
$Z(\omega, x) = x,$ on the extension $(\tilde{\Omega},
\tilde{\mathcal {F}}, \tilde{\mathbb{P}})$ of $({\Omega}, {\mathcal
{F}}, {\mathbb{P}})$ with the expectation $\tilde{\mathbb{E}}$ we
have
\begin{equation}
\label{def2} \mathbb{E}(Y f(Z_{n})) \longrightarrow
\tilde{\mathbb{E}}(Y f(Z)),
\end{equation}
then we say that $Z_{n}$ converges stably to $Z,$ denoted by
$\stackrel{\mathcal {S} - \mathcal {L}} \longrightarrow.$

The stable convergence implies the following crucial property, which
is fundamental for the mixed normal distribution with random
variance of the local polynomial estimator, detailed in the proof of
Theorem 1 and 2.

\noindent $\bf{Lemma~2.1.}$~~if $Z_{n} \stackrel{\mathcal {S} -
\mathcal {L}} \longrightarrow Z $ and if $Y_{n}$ and $Y$ are
variables defined on $(\Omega, \mathcal {F}, \mathbb{P})$ and with
values in the same Polish space F, then
\begin{equation}
\label{def3} Y_{n} \stackrel{P} \longrightarrow
Y~~~~~\Rightarrow~~~~~(Y_{n}, ~Z_{n}) \stackrel{\mathcal {S} -
\mathcal {L}} \longrightarrow (Y, ~Z),
\end{equation}
which implies that $Y_{n} \times Z_{n} \stackrel{\mathcal {S} -
\mathcal {L}} \longrightarrow Y \times Z$ through the continuous
function $g(x, y) = x*y.$

\noindent {\textbf{2) Convergence of triangular arrays.}}

In this part, we give the available convergence criteria for stable
convergence of partial sums of triangular arrays, one can refer to
Jacod \cite{jac} (P17-Lemma 4.4).

\noindent $\bf{Lemma~2.2.}$~~{\textbf{[Jacod's stable convergence
theorem]} A sequence of $\mathbb{R}-$valued variables $(\zeta_{n,
i}: i \geq 1)$ defined on the filtered probability space $(\Omega,
\mathcal {F}, (\mathcal {F})_{t \geq 0}, \mathbb{P})$ is $\mathcal
{F}_{i \Delta_{n}}-$measurable for all $n, i.$ Assume there exists a
continuous adapted $\mathbb{R}-$valued process of finite variation
$B_{t}$ and a continuous adapted and increasing process $C_{t}$, for
any $t
> 0,$ we have
\begin{equation}
\label{def4} \sup_{0 \leq s \leq t}\big|\sum_{i =
1}^{[s/\Delta_{n}]}\mathbb{E}\big[\zeta_{n, i} | \mathcal {F}_{(i -
1)\Delta_{n}}\big] - B_{s}\big| \stackrel{p} \longrightarrow 0,
\end{equation}
\begin{equation}
\label{def5} ~~~~~~~~~~~~~~~~~~~~~~~~~\sum_{i =
1}^{[t/\Delta_{n}]}\big(\mathbb{E}\big[\zeta_{n, i}^{2} | \mathcal
{F}_{(i - 1)\Delta_{n}}\big] - \mathbb{E}^{2}\big[\zeta_{n, i} |
\mathcal {F}_{(i - 1)\Delta_{n}}\big]\big) - C_{t} \stackrel{p}
\longrightarrow 0,
\end{equation}
\begin{equation}
\label{def6} \sum_{i = 1}^{[t/\Delta_{n}]}\mathbb{E}\big[\zeta_{n,
i}^{4} | \mathcal {F}_{(i - 1)\Delta_{n}}\big] \stackrel{p}
\longrightarrow 0.
\end{equation}
Assume also
\begin{equation}
\label{def7} \sum_{i = 1}^{[t/\Delta_{n}]}\mathbb{E}\big[\zeta_{n,
i} \Delta_{n}^{i}H | \mathcal {F}_{(i - 1)\Delta_{n}}\big]
\stackrel{p} \longrightarrow 0,
\end{equation}
where either H is one of the components of Wiener process $W$ or is
any bounded martingale orthogonal (in the martingale sense) to $W$
and $\Delta_{n}^{i}H = H_{i\Delta_{n}} - H_{(i - 1)\Delta_{n}}.$

Then the processes $$\sum_{i = 1}^{[t/\Delta_{n}]}\zeta_{n, i}
\stackrel{\mathcal {S} - \mathcal {L}} \longrightarrow B_{t} +
M_{t},$$ where $M_{t}$ is a continuous process defined on an
extension $\big(\widetilde{\Omega}, \widetilde{P},
\widetilde{\mathcal {F}}\big)$ of the filtered probability space
$\big({\Omega}, {P}, {\mathcal {F}}\big)$ and which, conditionally
on the the $\sigma-$filter $\mathcal {F}$, is a centered Gaussian
$\mathbb{R}-$valued process with $\widetilde{E}\big[M_{t}^{2} |
\mathcal {F}\big] = C_{t}.$

\noindent {\textbf{Remark 2.1.}} As Jacod \cite{jac} mentioned that
the key assumption of Lemma 2.2 is that for all $n,~i$ the variable
$\zeta_{n, i}$ is $\mathcal {F}_{i\Delta_{n}}-$measurable. For
Nadaraya-Watson estimator, the triangular arrays of numerator in
(5):
$$\sum_{i=1}^{n}K\left(\frac{X_{t_{i-1}}-x}{h_n}\right)\frac{(X_{t_{i}}-X_{t_{i-1}})^2}{\delta}I_{\{(\Delta_{i}X)^2
\leq \vartheta(\delta)\}}$$ is $\mathcal
{F}_{i\Delta_{n}}-$measurable, so Mancini and Ren\`{o} \cite{mr} can
employ Lemma 2.2 to prove the stable convergence for numerator of
$\bar{\sigma}^2_n(x).$ However, for local linear estimator, the
triangular arrays of numerator in (\ref{c1}):
$$\sum_{i=1}^{n}K\left(\frac{X_{t_{i-1}}-x}{h}\right)\left\{\frac{\delta
S_{n,2}}{h^2}-(\frac{X_{t_{i-1}}-x}{h})\frac{\delta
S_{n,1}}{h}\right\}\frac{(\Delta_{i}X)^{2} I_{\{(\Delta_{i}X)^2 \leq
\vartheta(\delta)\}}}{\delta}$$ with $S_{n,k} =
\frac{1}{h}\sum_{i=1}^{n}K\left(\frac{X_{i\delta}-x}{h}\right)\left(X_{i\delta}-x\right)^k$,
for $k= 1, 2$ is not $\mathcal {F}_{i\Delta_{n}}-$measurable due to
$S_{n,k}$, so we can not directly employ Lemma 2.2 to show the
stable convergence for it. Fortunately, we could deal with the
problem under some techniques with the help of Lemma 2.1, one can
refer to the third part or the detailed proof for some understanding
the methodology.

\section{Setting and Main results}

Recall that  a diffusion process with jumps can be defined by the
following stochastic differential equation (\ref{om}):
$$dX_{t}=\mu(X_{t-})dt+\sigma(X_{t-})dW_{t}+dJ_{t},\qquad t\in[0 , T]$$
where $\mu(x)$ and $\sigma(x)$ are smooth functions,
$W=\{W_{t},t\geq 0\}$ is a standard Brownian motion, where $J_{t}$
is a pure jump semimartingale. The jumps $J_{t} = J_{1t} +
\tilde{J}_{2t}$ consist of large and infrequent jump component
$J_{1t}$ (finite activity) as well as small and frequent jump
component with finite variation $\tilde{J}_{2t}$ (infinite
activity). $J_{1t}$ is a finite activity (FA) pure jump
semimartingale (e.g. driven by a doubly stochastic compound Poisson
process with jump intensity $\lambda( \cdot )$ in $L^1(\Omega \times
[0 , +\infty))$), independent of $\{W_{t},t\ge 0\}$.

Generally, since $J_{1t}$ is any FA pure jump semimartingale ,which
we can write as $$J_{1t} = \int_0^t\int_{\mathscr{R}}{x\cdot
m(dx,du)} =\sum_{l=1}^{N_t}\gamma_l,$$ where $m$ is the jump random
measure of $J_{1t}$, the jump intensity $\lambda( \cdot )$ is a
stochastic process, and $N_t :=\int_0^t\int_{\mathscr{R}}{1\cdot
m(dx,du)}$ is a.s. finite.

$\tilde{J}_{2t}$ is assumed to be a pure jump L\'{e}vy process of
type $$\tilde{J}_{2s} := \int_{0}^{s} \int_{|x| \leq 1} x [m(dt, dx)
- \nu(dx) dt]$$ with $\nu \{|x| \leq 1\} = + \infty,$ where $\nu$ is
the L\'{e}vy measure of $\tilde{J}_{2}.$ Cont and Tandov \cite{ct}
discussed the $Blumenthal-Getoor~index$ for any L\'{e}vy process:
$$\alpha := \inf \left\{\delta \geq 0~:~\int_{|x| \leq 1} |x|^{\delta} \nu(dx) < + \infty \right\},$$
which measure how frenetic the jump activity. Here we only consider
the case $\alpha < 1$, which implies $\tilde{J}_{2}$ has finite
variation, that is, $\sum_{s \leq T} \Delta \tilde{J}_{2s} <
\infty.$ In this case, Protter \cite{pro} showed that there exists
the local time $L_{t}(x)$, which is continuous in $t$ and
c\`{a}dl\`{a}g in $x$, and the occupation time formula keeps true.

As a nonparametric methodology, the local polynomial estimator has
received increasing attention and become a powerful and useful
diagnostic tool for data analysis making use of the observation
information to estimate corresponding functions and its derivatives
without assuming the function form. The estimator is obtained by
locally fitting $p$-th polynomial to the data via weighted least
squares. The procedure of weighted local polynomial regression is
conducted as follows: under some smoothness conditions of the curve
$m(x)$, we can expand $m(x)$ in a neighborhood of the point $x_{0}$
as follows:
\begin{eqnarray*}
m(x) & \approx & m(x_{0}) + m^{'}(x_{0})(x - x_{0}) +
\frac{m^{''}(x_{0})}{2!}(x - x_{0})^{2} + \cdots +
\frac{m^{(p)}(x_{0})}{p~!}(x - x_{0})^{p}\\
& \equiv & \sum_{j=0}^{p}\beta_{j}(x - x_{0})^{j},
\end{eqnarray*}
where $\beta_{j} = \frac{m^{(j)}(x_{0})}{j~!}.$

Thus, the problem of estimating infinite dimensional $m(x)$ is
equivalent to estimating the $p$-dimensional parameter $\beta_{0},
\beta_{0}, \cdots, \beta_{p}.$

When we want to estimate $\sigma^2(x)$ in model (\ref{om}) from the
discrete time observations $\{X_{0}, X_{t_{1}}, ... , X_{t_{n-1}},
X_{t_{n}}\}$, with $t_{i} - t_{i-1} = \delta$, we can consider a
weighted local polynomial regression through the threshold  method
to eliminate the impact of jumps:
\begin{equation}
\label{q2} \arg \min_{\beta_{0},\beta_{1},\cdots,\beta_{p}}
\sum_{i=0}^{n-1} \Big\{Y_{t_i} - \sum_{j=0}^{p}\beta_{j}(X_{t_i} -
x)^{j}\Big\}^{2}K_{h_{n}}(X_{t_i} - x),\end{equation} where $Y_{t_i}
= \frac{(X_{t_{i+1}} - X_{t_i})^2}{\delta}I_{\{(\Delta_{i}X)^2 \leq
\vartheta(\delta)\}}$ and
$K_{h_{n}}(\cdot)=\frac{1}{h_{n}}K(\frac{\cdot}{h_{n}})$ is kernel
function with $h_{n}$ the bandwidth, $\vartheta(\delta)$ is a
threshold function.

Under the algebra calculus (one can refer to Fan and Gijbels
\cite{fg}), we obtain the solution to this minimization problem
(\ref{q2}) is
\begin{equation}
\label{s1} \hat{\beta} := (\hat{\beta}_{0}, \hat{\beta}_{1}, \cdot
\cdot \cdot, \hat{\beta}_{p})^{T} = S_{n}^{- 1} Q_{n},
\end{equation}
with
$$S_{n}~=~\begin{pmatrix}
S_{n, 0} & \cdot \cdot \cdot & S_{n, p} \\
\cdot \cdot \cdot & ~~~ & \cdot \cdot \cdot\\
S_{n, p} & \cdot \cdot \cdot & S_{n, 2p}
\end{pmatrix}, \qquad
Q_{n}~=~\begin{pmatrix}
Q_{n, 0} \\
\cdot \cdot \cdot\\
Q_{n, p}
\end{pmatrix},$$ where $$S_{n, k} =
\frac{1}{h}\sum_{i=1}^{n}K\left(\frac{X_{i\delta}-x}{h}\right)\left(X_{i\delta}-x\right)^k$$
and $$Q_{n, k} =
\frac{1}{h}\sum_{i=1}^{n}K\left(\frac{X_{i\delta}-x}{h}\right)\left(X_{i\delta}-x\right)^k
\frac{(X_{t_{i+1}} - X_{t_i})^2}{\delta}I_{\{(\Delta_{i}X)^2 \leq
\vartheta(\delta)\}}.$$

\noindent As Fan \cite{fan92} showed, since this methodology is
mainly conducted by means of locally fitting $p$-th polynomial, the
degree is not allowed higher, usually $\tau + 1$ and rarely $\tau +
3,$ where $\tau$ is the degree of unknown function we need to
estimate in $\hat{\beta}.$ What we are interested in estimating
$\sigma^2(x)$ is $\hat{\beta}_{0},$ that is $\tau = 0,$  it is
reasonable for us to discuss $p = 1$ in this paper, which is the
local linear estimator.

In fact, we can write the solutions $\hat{\beta}_{0}$ of (\ref{q2})
with $p = 1$ for (\ref{s1}), that is,
\begin{equation}
\label{c1} \hat\sigma^2_{n}(x) =
\frac{\sum_{i=1}^{n}K(\frac{X_{t_{i-1}}-x}{h})\{\frac{\delta
S_{n,2}}{h^2}-(\frac{X_{t_{i-1}}-x}{h})\frac{\delta
S_{n,1}}{h}\}\frac{(\Delta_{i}X)^2 I_{\{(\Delta_{i}X)^2 \leq
\vartheta(\delta)\}}}{\delta}}{\sum_{i=1}^{n}K(\frac{X_{t_{i-1}}-x}{h})\{\frac{\delta
S_{n,2}}{h^2}-(\frac{X_{t_{i-1}}-x}{h})\frac{\delta
S_{n,1}}{h}\}}\end{equation}
 where
$\Delta_{i}X=X_{t_{i}}-X_{t_{i-1}}$, $S_{n,k} =
\frac{1}{h}\sum_{i=1}^{n}K(\frac{X_{i\delta}-x}{h})(X_{i\delta} -
x)^k$, \ for $k= 1, 2$.

In fact, there are many papers on local linear estimator in
regression analysis and time series analysis, more details can be
found in Fan and Gijbels \cite{fg}. The primary purpose of the
present paper is to establish central limit theorems for
$\hat\sigma^2_{n}(x)-\sigma^2(x).$

The triangular arrays of numerator of local linear estimator in
(\ref{c1}) is
\begin{eqnarray*}
\hat\sigma^2_{n}(x)^{Num} & = &
\sum_{i=1}^{n}K\left(\frac{X_{t_{i-1}}-x}{h}\right)\left\{\frac{\delta
S_{n,2}}{h^2}-(\frac{X_{t_{i-1}}-x}{h})\frac{\delta
S_{n,1}}{h}\right\}\frac{(\Delta_{i}X)^{2} I_{\{(\Delta_{i}X)^2 \leq
\vartheta(\delta)\}}}{\delta}\\
& = & \begin{pmatrix} \frac{\delta S_{n,2}}{h^2}, & \frac{\delta
S_{n,1}}{h} \end{pmatrix} \begin{pmatrix}
\sum_{i=1}^{n}K\left(\frac{X_{t_{i-1}}-x}{h}\right) \\
\sum_{i=1}^{n}K\left(\frac{X_{t_{i-1}}-x}{h}\right)\left(\frac{X_{t_{i-1}}-x}{h}\right)\end{pmatrix}
\frac{(\Delta_{i}X)^{2} I_{\{(\Delta_{i}X)^2 \leq
\vartheta(\delta)\}}}{\delta}.
\end{eqnarray*}
We have shown in remark 2.1 that we can not directly employ Lemma
2.2 to show the stable convergence for the numerator. With the help
of lemma 4.3, we obtain $\frac{\delta S_{n,2}}{h^2}
\stackrel{\mathrm{a.s.}}{\longrightarrow}~\frac{K_{1}^{2}L_{X}(T,x)}{\sigma^2(x)}$
and $\frac{\delta S_{n,1}}{h}
\stackrel{\mathrm{a.s.}}{\longrightarrow}~\frac{K_{1}^{1}L_{X}(T,x)}{\sigma^2(x)}.$
Hence,
\begin{eqnarray*}
\frac{\hat\sigma^2_{n}(x)^{Num}}{L_{X}(T,x)} &
\stackrel{\mathrm{a.s.}}{\longrightarrow} &
\begin{pmatrix} \frac{K_{1}^{2}}{\sigma^2(x)}, & \frac{K_{1}^{1}}{\sigma^2(x)} \end{pmatrix}
\begin{pmatrix}
\sum_{i=1}^{n}K\left(\frac{X_{t_{i-1}}-x}{h}\right) \\
\sum_{i=1}^{n}K\left(\frac{X_{t_{i-1}}-x}{h}\right)\left(\frac{X_{t_{i-1}}-x}{h}\right)\end{pmatrix}
\frac{(\Delta_{i}X)^{2} I_{\{(\Delta_{i}X)^2 \leq
\vartheta(\delta)\}}}{\delta}\\
& = & \sum_{i=1}^{n}
K\left(\frac{X_{t_{i-1}}-x}{h}\right)\left\{\frac{K_{1}^{2}}{\sigma^2(x)}
- (\frac{X_{t_{i-1}}-x}{h})\frac{K_{1}^{1}}{\sigma^2(x)}
\right\}\frac{(\Delta_{i}X)^{2} I_{\{(\Delta_{i}X)^2 \leq
\vartheta(\delta)\}}}{\delta}\\
& := & \sum_{i=1}^{n} \zeta_{n, i}.
\end{eqnarray*}
Obviously, the triangular arrays $\zeta_{n, i}$ is $\mathcal
{F}_{i\Delta_{n}}-$measurable, so we can utilize lemma 2.2 to prove
the stable convergence in law for $\sum_{i=1}^{n} \zeta_{n, i}.$
From lemma 4.3, we know that
$\frac{1}{h}\sum_{i=1}^{n}K\big(\frac{X_{t_{i-1}}-x}{h}\big)\delta~
\stackrel{\mathrm{a.s.}}{\longrightarrow}~\frac{L_{X}(T,x)}{\sigma^2(x)}$,
which implies that we can prove the stable convergence in law for
$\hat\sigma^2_{n}(x)^{Num}$ by means of the property as lemma 2.1,
more details can be sketched in the proof of Theorem 1.

Assume that $\mathfrak{D}=(l,u)$ with $-\infty\le l < u\leq \infty$
is the range of the process $X$. We will use notation
``$\stackrel{p}{\rightarrow}$'' to denote ``convergence in
probability'', ``$\stackrel{a.s.}{\rightarrow}$'' to denote
``convergence almost surely'', ``$\Rightarrow$'' to denote
``convergence in distribution'' and ``$\stackrel{\mathcal {S} -
\mathcal {L}} \longrightarrow$'' to denote ``stable convergence in
law''. We impose the following assumptions throughout the paper.

\begin{ass}\label{amu}
For model (\ref{om}), the coefficients $\mu_{t}$ and $\sigma_{t}$
are progressively measurable process with c\`{a}dl\`{a}g paths and
the following polynomial growth:

(i) For each $n \in \mathbb{N},$ there exists a positive  constant
$L_{n}$ such that for any $|x| \leq n, |y| \leq n$,
$$|\mu(x) - \mu(y)| + |\sigma(x) - \sigma(y)| \leq L_{n}|x - y|,$$

(ii)There exists a positive constant  C, such that for all $x \in
\mathbb{R}$,
$$|\mu(x)| + |\sigma(x)| \leq C (1 + |x|),$$

(iii) $\sigma^{2}$ is strictly positive and $\sigma^{'}$ is bounded.
\end{ass}

\begin{remark}
This assumption (i) and (ii) guarantees the existence and uniqueness
of a strong solution to $X$ in Eq.(\ref{om}) on our filtered
probability space

\noindent $(\Omega, (\mathscr{F}_{t})_{t\in[0,T]}, \mathscr{F}, P)$,
which is adapted with c\`{a}dl\`{a}g paths on $[0, T]$, see Ikeda
and Watanabe \cite{iw} for more details.
\end{remark}

\begin{ass}\label{ak}
The kernel function K $(\cdot) : \mathbb{R}^{+} \rightarrow
\mathbb{R}^{+}$ is a continuous differentiable and bounded density
function with bounded compact support, such that $\int_{0}^{\infty}
K (u)du = 1$ and $\int_{0}^{\infty}{|K^{'}(u)|^{2}}du < \infty$.
Denote $K_{i}^{j} = \int_{0}^{\infty}{u^{j}K^{i}(u)} du < \infty.$
\end{ass}

\begin{remark}
The one-sided and asymmetric kernel function is mentioned in
assumption 4 (ii) of Bandi and Nguyen \cite{bn}. Fan and Zhang
\cite{fz} proposed that the one-sided kernel function will make
prediction easier, such as the Epanechnikov kernel $K(u) =
\frac{3}{4}(1 - u^{2}) I_{(u < 0)}.$
\end{remark}

\begin{ass}\label{ah}
A bandwidth parameter is a sequence of real number $h_{n}$ such that
as $n \longrightarrow \infty,$ we have $h_{n} \longrightarrow 0,$
$\frac{\delta{ln(\frac{1}{\delta})}}{h^{2}} \rightarrow 0.$
\end{ass}

For model (\ref{om}), under the assumptions, we build the
corresponding theorems of local linear threshold estimators
(\ref{c1}) for different jump cases.

\noindent $\bf{Finite~Activity~Jumps~(FA~case)}:$

\noindent In (\ref{om}), if we assume that $J_{t} = J_{1, t} =
\sum_{l=1}^{N_{t}}\gamma_{l}$, where $N_{t}$ is a doubly stochastic
Poisson process with an intensity process $\lambda(X_{t-})$, we have
the following result.
\begin{theorem}\label{th1}
Under Assumptions \ref{amu}, \ref{ak}, \ref{ah} and we also assume that\\
(1) as $\delta \rightarrow 0$ both the threshold function
$\vartheta(\delta)$ and $\frac{\delta
ln(\frac{1}{\delta})}{\vartheta(\delta)}$ tend to 0;\\
(2) $\frac{h_{n}^{5}}{\delta_{n, T}} = O_{p}(1),$ then we can obtain
$$\sqrt{\frac{h_{n}}{\delta_{n, T}}}\left(\hat\sigma^2_{n}(x) - \sigma^2(x) - \frac{1}{2} \frac{(\sigma^{2})^{''}(x)[(K_{1}^{2})^{2} -
K_{1}^{1}K_{1}^{3}]}{K_{1}^{2} - (K_{1}^{1})^{2}}  \cdot h^{2}
\right) \stackrel{\mathcal {S} - \mathcal {L}} \longrightarrow M
\mathscr{N} \left(0 , 2 \frac{\sigma^{6}(x)}{L_{X}(T,x)} \cdot
V_{x}\right),$$ where $\delta_{n, T} = \frac{T}{n}$, $V_{x} =
\frac{K_{2}^{0}\cdot\big(K_{1}^{2}\big)^{2} +
K_{2}^{2}\cdot\big(K_{1}^{1}\big)^{2} - 2K_{2}^{1}\cdot
K_{1}^{2}\cdot K_{1}^{1}}{(K_{1}^{2} - (K_{1}^{1})^{2})^{2}}$ and
$M\mathscr{N}(0, U^2)$ is a random variable having a mixed normal
law with the characteristic function $\phi(u) =
E[e^{-\frac{U^2u^2}{2}}].$
\end{theorem}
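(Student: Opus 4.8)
The plan is to reduce the statement to an application of Jacod's stable convergence theorem (Lemma 2.2) applied to the rescaled triangular array arising from the numerator of $\hat\sigma^2_n(x)$, and then to combine this with the almost-sure limits of the denominator via the product property of stable convergence (Lemma 2.1). The first step is to write
$$\hat\sigma^2_n(x) - \sigma^2(x) = \frac{\hat\sigma^2_n(x)^{Num} - \sigma^2(x)\cdot \hat\sigma^2_n(x)^{Den}}{\hat\sigma^2_n(x)^{Den}},$$
and to use Lemma 4.3 to replace the random weights $\frac{\delta S_{n,2}}{h^2}$ and $\frac{\delta S_{n,1}}{h}$ by their a.s. limits $\frac{K_1^2 L_X(T,x)}{\sigma^2(x)}$ and $\frac{K_1^1 L_X(T,x)}{\sigma^2(x)}$, exactly as sketched in the excerpt. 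This makes the leading array $\zeta_{n,i}$ genuinely $\mathcal F_{i\Delta_n}$-measurable, which is the crucial hypothesis of Lemma 2.2 that the raw local-linear array fails to satisfy.

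Next I would dispose of the jump contribution. Since we are in the FA case with $J_t = \sum_{l=1}^{N_t}\gamma_l$ a finite-activity pure-jump process, I would invoke Mancini's thresholding argument: under condition (1), namely $\vartheta(\delta)\to 0$ together with $\frac{\delta\ln(1/\delta)}{\vartheta(\delta)}\to 0$, the indicator $I_{\{(\Delta_i X)^2\le\vartheta(\delta)\}}$ asymptotically excludes every interval containing a jump while retaining every purely continuous increment, with the error vanishing faster than the $\sqrt{h_n/\delta_{n,T}}$ normalization. This lets me replace $(\Delta_i X)^2 I_{\{(\Delta_i X)^2\le\vartheta(\delta)\}}$ by $(\Delta_i^c X)^2$, where $\Delta_i^c X$ is the increment of the continuous part $\int \mu\,dt + \int\sigma\,dW$, up to a negligible remainder. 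After this reduction the problem is effectively the one for a continuous diffusion, and $\frac{(\Delta_i^c X)^2}{\delta}\approx \sigma^2(X_{t_{i-1}})\,\chi^2_1$ conditionally.

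The core computation is then to verify the four conditions of Lemma 2.2 for the centered, normalized array. Writing $B_s$ for the drift/bias term and $C_t$ for the conditional variance, I would compute the first conditional moment $\mathbb E[\zeta_{n,i}\mid\mathcal F_{(i-1)\Delta_n}]$, whose sum produces the bias correction $\frac12\frac{(\sigma^2)''(x)[(K_1^2)^2 - K_1^1 K_1^3]}{K_1^2 - (K_1^1)^2}h^2$ via a second-order Taylor expansion of $\sigma^2$ around $x$ inside the kernel average and a conversion of the empirical sums into local-time integrals through the occupation-time formula. Condition (\ref{def5}) yields the variance $C_t$; here the fourth kernel moments $K_2^0, K_2^1, K_2^2$ and the factor $2\sigma^6(x)$ (from $\mathrm{Var}(\sigma^2\chi^2_1) = 2\sigma^4$ times an extra $\sigma^2$ from the time-to-space change of variables) combine to give $2\frac{\sigma^6(x)}{L_X(T,x)}V_x$. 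Condition (\ref{def6}) is the Lyapunov-type negligibility of fourth moments, controlled by the kernel's boundedness and the rate in Assumption \ref{ah}, and condition (\ref{def7}) is the orthogonality condition ensuring the Gaussian limit is independent of $W$; since $\zeta_{n,i}$ is built from squared Brownian increments, the relevant conditional cross-moment with $\Delta_i^n W$ vanishes by the symmetry $\mathbb E[(\Delta_i^n W)^3]=0$, and orthogonality to any bounded martingale orthogonal to $W$ follows from the martingale representation.

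The main obstacle I expect is twofold. First, the passage through Lemma 2.1: because $\zeta_{n,i}$ already incorporates the \emph{limits} of $S_{n,k}$ rather than their finite-$n$ values, I must argue carefully that the difference between $\hat\sigma^2_n(x)^{Num}$ and $\sum_i\zeta_{n,i}$ (scaled by $\sqrt{h_n/\delta_{n,T}}$) is $o_p(1)$, using the a.s. convergence rates of $\frac{\delta S_{n,k}}{h^{k}}$ from Lemma 4.3 to beat the normalization — this is precisely the novelty the authors advertise, replacing the broken adaptedness with a two-stage argument. Second, the bias term requires keeping track of \emph{all} the kernel moments through the matrix inversion of $S_n^{-1}$, so that the combination $(K_1^2)^2 - K_1^1 K_1^3$ in the numerator and $K_1^2 - (K_1^1)^2$ in the denominator emerge correctly; a sign or index slip there propagates into both the bias and the variance constant $V_x$. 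Condition (2), $h_n^5/\delta_{n,T}=O_p(1)$, is exactly what makes the $O(h^2)$ bias survive the $\sqrt{h_n/\delta_{n,T}}$ scaling as a genuine (non-vanishing, non-exploding) centering term, so I would use it to justify that the stated centering is the correct one.
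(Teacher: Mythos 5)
Your overall architecture is the same as the paper's: use the threshold (Mancini's result) to reduce to the continuous part $Y$, use Lemma \ref{l3} together with Lemma 2.1 to replace the non-adapted weights $\frac{\delta S_{n,2}}{h^2},\frac{\delta S_{n,1}}{h}$ by their a.s.\ limits so that the array becomes $\mathcal F_{i\Delta_n}$-measurable and Jacod's Lemma 2.2 applies, and use condition (2) to make the $h^2$ term a genuine centering. The genuine gap is in \emph{where you extract the bias}. You substitute the limit weights $K^{\star}_{i-1}$ at the outset and then propose to read the bias off the conditional-mean condition (\ref{def4}) of Lemma 2.2 by a second-order Taylor expansion. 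But the reason the local linear bias is $O(h^{2})$ rather than $O(h)$ is the \emph{exact} orthogonality of the finite-$n$ weights,
$$\sum_{i=1}^{n}K\Big(\tfrac{X_{t_{i-1}}-x}{h}\Big)\Big\{\tfrac{\delta S_{n,2}}{h^2}-\Big(\tfrac{X_{t_{i-1}}-x}{h}\Big)\tfrac{\delta S_{n,1}}{h}\Big\}(X_{t_{i-1}}-x)=0,$$
which annihilates the first-order term $(\sigma^{2})'(x)(X_{t_{i-1}}-x)$ identically. After your substitution this identity holds only asymptotically: writing $K_i=K\big(\frac{X_{t_{i-1}}-x}{h}\big)$, the first-order term becomes
$$\sqrt{nh^{3}}\,(\sigma^{2})'(x)\Big[\frac{K_{1}^{2}}{\sigma^{2}(x)}\Big(\frac{1}{h}\sum_{i=1}^{n}K_i\tfrac{X_{t_{i-1}}-x}{h}\delta\Big)-\frac{K_{1}^{1}}{\sigma^{2}(x)}\Big(\frac{1}{h}\sum_{i=1}^{n}K_i\big(\tfrac{X_{t_{i-1}}-x}{h}\big)^{2}\delta\Big)\Big],$$
and the bracket tends to $\frac{K_1^2K_1^1-K_1^1K_1^2}{\sigma^2(x)}\cdot\frac{L_X(T,x)}{\sigma^2(x)}=0$ by Lemma \ref{l3} --- but Lemma \ref{l3} gives \emph{no rate}. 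Under condition (2), $\sqrt{nh^{3}}\rightarrow\infty$, so you would need the bracket to be $o_{a.s.}(h)$. It is not: the spatial increment of local time is only H\"older continuous of order $1/2$, so the bracket is generically of order $h^{1/2}$, and $\sqrt{nh^{3}}\cdot h^{1/2}=\sqrt{nh^{4}}\rightarrow\infty$ when $nh^{5}$ stays bounded away from $0$. The bias you would obtain from (\ref{def4}) is therefore not the stated one; the proof breaks at exactly this step. (The same missing-rate issue undercuts your stated plan to control the difference between $\hat\sigma^2_n(x)^{Num}$ and $\sum_i\zeta_{n,i}$ "using the a.s.\ convergence rates from Lemma 4.3": no such rates are available.)

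The paper avoids this by a different decomposition. It applies It\^{o}'s formula to $(\Delta_iY)^2-\sigma^2(x)\delta$, splitting the numerator into a martingale-type array $\sum_i q_i$ plus the term $\sqrt{n/h}\sum_iK_i\{\frac{\delta S_{n,2}}{h^2}-\frac{X_{t_{i-1}}-x}{h}\frac{\delta S_{n,1}}{h}\}\int_{t_{i-1}}^{t_i}(\sigma^2_s-\sigma^2(x))ds$. The limit weights $K^{\star}_{i-1}$ are substituted \emph{only inside the martingale part} (there the substitution error multiplies a stochastically bounded quantity and no cancellation is required, so no rate is needed), and Jacod's conditions are checked with $B\equiv 0$; the bias term keeps the exact weights, uses the orthogonality identity above to kill the $(\sigma^2)'$ contribution exactly, and only then invokes Lemma \ref{l3} on the surviving $h^2$-order terms, where an $o_{a.s.}(1)$ relative error is harmless. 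If you reorganize your argument this way, your remaining steps (the variance computation via the occupation time formula, the fourth-moment condition, the orthogonality condition) go through essentially as you describe. Two smaller corrections: the constants $K_2^j$ are moments of $K^2$, not fourth kernel moments; and condition (\ref{def7}) is not verified by the symmetry $\mathbb{E}[(\Delta_i W)^3]=0$ (with state-dependent $\sigma$ the cross-moment is $O(\delta^{3/2})$, not zero) but by a H\"older bound whose sum is $O(\sqrt{h}\ln(1/\delta))\rightarrow 0$.
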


\noindent {\textbf{Corollary~1.}} Under Assumptions \ref{amu}, \ref{ak}, \ref{ah} and we also assume that\\
(1) as $\delta \rightarrow 0$ both the threshold function
$\vartheta(\delta)$ and $\frac{\delta
ln(\frac{1}{\delta})}{\vartheta(\delta)}$ tend to 0;\\
(2) $\frac{h_{n}^5 \hat{L}_{X}(T,x)}{\delta_{n, T}} = O_{p}(1),$
then we can obtain
$$\sqrt{\frac{h_{n} \hat{L}_{X}(T,x) }{\delta_{n, T}}}\left(\hat\sigma^2_{n}(x) - \sigma^2(x) - \frac{1}{2} \frac{(\sigma^{2})^{''}(x)[(K_{1}^{2})^{2} -
K_{1}^{1}K_{1}^{3}]}{K_{1}^{2} - (K_{1}^{1})^{2}}  \cdot h^{2}
\right) \Rightarrow \mathscr{N} \left(0 , 2 \sigma^{4}(x) \cdot
V_{x}\right),$$ where $\hat{L}_{X}(T,x) =
\frac{1}{h}\sum_{i=1}^{n}K\big(\frac{X_{t_{i-1}}-x}{h}\big) \delta.$

\begin{remark}
According to Lemma 4.3, we know that $\hat{L}_{X}(T,x)
\stackrel{a.s.} \longrightarrow \frac{L_{X}(T,x)}{\sigma^{2}(x)},$
so we can deduce corollary 1 by means of lemma 2.1 easily with the
property that the stable convergence implies convergence in
distribution.
\end{remark}

\noindent $\bf{Infinite~Activity~Jumps~(IA~case)}:$

\noindent Furthermore, if we assume  $J_{t} = J_{1t} +
\tilde{J}_{2t}$, we have the following result
\begin{theorem}\label{th2}
Under Assumptions \ref{amu}, \ref{ak}, \ref{ah} and we also assume that:\\
(1) $\alpha < 1$ and
$\int_{|x| \leq \varepsilon} x^{2} \nu(dx) = O(\varepsilon^{2 - \alpha}),$
as $\varepsilon \rightarrow 0$;\\
(2) $h = \delta^{\phi}$ with $\frac{h_{n}^{5}}{\delta_{n, T}} =
O_{p}(1),$ $\vartheta(\delta) = \delta^{\eta} , \eta \in (0 , 1)$,
with $\eta / 2 > \phi,~(1 - \alpha\eta) - 1/2 + \phi/2
> 0$ and $\eta(1 - \alpha/2) - 1/2 + \phi/2 > 0,$ then we can obtain
$$\sqrt{\frac{h_{n}}{\delta_{n, T}}} \left(\hat\sigma^2_{n}(x) - \sigma^2(x) -
\frac{1}{2} \frac{(\sigma^{2})^{''}(x)[(K_{1}^{2})^{2} -
K_{1}^{1}K_{1}^{3}]}{K_{1}^{2} - (K_{1}^{1})^{2}}  \cdot h^{2}
\right) \stackrel{\mathcal {S} - \mathcal {L}} \longrightarrow M
\mathscr{N} \left(0 , 2 \frac{\sigma^{6}(x)}{L_{X}(T,x)} \cdot
V_{x}\right).$$
\end{theorem}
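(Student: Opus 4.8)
The plan is to obtain Theorem \ref{th2} from Theorem \ref{th1} by treating the infinite-activity component $\tilde{J}_{2t}$ as an asymptotically negligible perturbation. Write $X_{t}=X_{t}^{\circ}+\tilde{J}_{2t}$, where $X_{t}^{\circ}=X_{0}+\int_{0}^{t}\mu(X_{s-})ds+\int_{0}^{t}\sigma(X_{s-})dW_{s}+J_{1t}$ is exactly the finite-activity model of Theorem \ref{th1}. Let $\hat\sigma^{2}_{n,\circ}(x)$ denote the estimator \eqref{c1} constructed from the increments $\Delta_{i}X^{\circ}$ and the truncation $I_{\{(\Delta_{i}X^{\circ})^{2}\leq\vartheta(\delta)\}}$. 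Since the continuous-martingale parts $\int_{0}^{\cdot}\sigma(X_{s-})dW_{s}$ of $X$ and of $X^{\circ}$ coincide, their local times agree, $L_{X}(T,x)=L_{X^{\circ}}(T,x)$, so the target variance $2\sigma^{6}(x)V_{x}/L_{X}(T,x)$ is the same in both problems. One first checks that hypotheses (1)--(2) of Theorem \ref{th2} imply those of Theorem \ref{th1}; in particular $\delta\ln(1/\delta)/\vartheta(\delta)=\delta^{1-\eta}\ln(1/\delta)\to0$ because $\eta\in(0,1)$, while $h_{n}^{5}/\delta_{n,T}=O_{p}(1)$ is assumed directly. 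Thus Theorem \ref{th1} already delivers the stable convergence for $\hat\sigma^{2}_{n,\circ}(x)$, and it remains to prove
\[
\sqrt{\tfrac{h_{n}}{\delta_{n,T}}}\big(\hat\sigma^{2}_{n}(x)-\hat\sigma^{2}_{n,\circ}(x)\big)\stackrel{p}{\longrightarrow}0,
\]
after which Lemma 2.1, applied to the sum of the stably convergent sequence and this $o_{p}(1)$ term, yields the assertion.

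By Lemma 4.3 the prefactors $\delta S_{n,2}/h^{2}$ and $\delta S_{n,1}/h$ and the denominator of \eqref{c1} converge almost surely to the same deterministic limits whether built from $X$ or from $X^{\circ}$. Hence, up to terms that are $o_{p}(1)$ after scaling, the difference $\hat\sigma^{2}_{n}(x)-\hat\sigma^{2}_{n,\circ}(x)$ is a kernel-weighted linear functional of $g_{i}-g_{i}^{\circ}$, where $g_{i}=(\Delta_{i}X)^{2}I_{\{(\Delta_{i}X)^{2}\leq\vartheta\}}$ and $g_{i}^{\circ}=(\Delta_{i}X^{\circ})^{2}I_{\{(\Delta_{i}X^{\circ})^{2}\leq\vartheta\}}$. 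Expanding $\Delta_{i}X=\Delta_{i}X^{\circ}+\Delta_{i}\tilde{J}_{2}$ on the common truncation event gives
\[
g_{i}-g_{i}^{\circ}=2\,\Delta_{i}X^{\circ}\,\Delta_{i}\tilde{J}_{2}+(\Delta_{i}\tilde{J}_{2})^{2}+R_{i},
\]
where $R_{i}$ is supported on the symmetric difference of the two truncation events. I would control the scaled kernel-weighted sums of these three pieces using two ingredients: the growth rate $\sum_{i}K((X_{t_{i-1}}-x)/h)\asymp(h/\delta)L_{X}(T,x)/\sigma^{2}(x)$ from Lemma 4.3, and the moment estimate $\int_{|x|\leq\varepsilon}x^{2}\nu(dx)=O(\varepsilon^{2-\alpha})$ with $\varepsilon=\sqrt{\vartheta}=\delta^{\eta/2}$, which bounds the conditional second moment of the truncated $\tilde{J}_{2}$-increment over a window of length $\delta$ by $O(\delta^{1+\eta(1-\alpha/2)})$.

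The purely quadratic piece is the more transparent one. Its contribution to the numerator has conditional expectation of order $(h/\delta)\cdot O(\delta^{\eta(1-\alpha/2)})$, so after dividing by the denominator (of order $h/\delta$) and multiplying by $\sqrt{h_{n}/\delta_{n,T}}=\delta^{(\phi-1)/2}$ the net order is $\delta^{\eta(1-\alpha/2)-1/2+\phi/2}$, which tends to $0$ exactly by the hypothesis $\eta(1-\alpha/2)-1/2+\phi/2>0$. The genuinely delicate term, and the one I expect to be the main obstacle, is the cross product $2\sum_{i}w_{i}\Delta_{i}X^{\circ}\Delta_{i}\tilde{J}_{2}/\delta$, with $w_{i}$ the bounded kernel weights of \eqref{c1}. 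Because the truncation indicator couples the Brownian increment $\Delta_{i}X^{\circ}$ to the jump increment $\Delta_{i}\tilde{J}_{2}$, this sum is not conditionally centered, so one cannot simply read off its order from a martingale variance bound; instead one must separate the contribution of the compensated small jumps (handled through the conditional second moment, which scales harmlessly) from the conditional bias created by increments that the threshold catches, and it is this bias that the remaining inequality $(1-\alpha\eta)-1/2+\phi/2>0$ is designed to kill. The condition $\eta/2>\phi$ enters at the complementary point, guaranteeing that a pure Brownian increment of size $O(\sqrt{\delta\ln(1/\delta)})$ lies below the threshold $\sqrt{\vartheta}=\delta^{\eta/2}$ with overwhelming probability, so that the continuous part is never over-truncated and $g_{i}^{\circ}$ genuinely reproduces the continuous quadratic variation.

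Finally, the remainder $R_{i}$ is controlled by the same moment estimate, since the two truncation events can disagree only when $|\Delta_{i}\tilde{J}_{2}|$ bridges an order-$\sqrt{\vartheta}$ gap, an event whose probability is governed by the tail $\nu\{|x|>\sqrt{\vartheta}\}$ and therefore subject to the same parameter constraints. Once all three pieces are shown to be $o_{p}(1)$ after scaling, the displayed negligibility holds, and combining the stable convergence of $\hat\sigma^{2}_{n,\circ}(x)$ from Theorem \ref{th1} with Lemma 2.1 produces the stated mixed-normal stable limit; in particular the infinite-activity jumps contribute neither to the bias correction nor to the asymptotic variance.
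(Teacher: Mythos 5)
Your reduction to Theorem \ref{th1} breaks at its very first step, before any of the increment estimates are reached. You need an auxiliary finite-activity process $X^{\circ}$ that simultaneously satisfies: (i) $\Delta_{i}X=\Delta_{i}X^{\circ}+\Delta_{i}\tilde{J}_{2}$, (ii) Theorem \ref{th1} applies to $X^{\circ}$, and (iii) the limit of the $X^{\circ}$-estimator coincides with the claimed limit for $\hat\sigma^{2}_{n}(x)$. No choice of $X^{\circ}$ delivers all three. With your definition $X^{\circ}:=X-\tilde{J}_{2}$ (coefficients $\mu(X_{s-})$, $\sigma(X_{s-})$ evaluated along $X$), (i) holds but $X^{\circ}$ is not a solution of the finite-activity model (\ref{om}) with $J=J_{1}$ --- its drift and diffusion are not functions of its own state --- so Theorem \ref{th1} cannot be invoked; worse, $\hat\sigma^{2}_{n,\circ}(x)$ localizes at times when $X^{\circ}_{t_{i-1}}\approx x$, where the true spot volatility is $\sigma^{2}(X_{t_{i-1}})=\sigma^{2}(x+\tilde{J}_{2,t_{i-1}})$, so $\hat\sigma^{2}_{n,\circ}(x)$ is not even consistent for $\sigma^{2}(x)$. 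If instead $X^{\circ}$ solves the FA equation driven by the same $W$ and $J_{1}$, then Theorem \ref{th1} applies but $X-X^{\circ}\neq\tilde{J}_{2}$ and (i) fails. In either case your key identity $L_{X}(T,x)=L_{X^{\circ}}(T,x)$ is false: the semimartingale local time is the density of the occupation measure $A\mapsto\int_{0}^{T}I_{A}(X_{s-})\,d[X]^{c}_{s}$, which depends on where the whole path sits, not only on its continuous martingale part; since $X_{s}$ and $X^{\circ}_{s}$ differ by the $O_{p}(1)$ random shift $\tilde{J}_{2,s}$, the two processes visit the neighbourhood of $x$ at entirely different epochs, and $L_{X}(T,x)\neq L_{X^{\circ}}(T,x)$ in general. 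Consequently the two estimators are kernel averages over essentially disjoint subsamples, their mixed-normal limits have different conditional variances ($2\sigma^{6}(x)V_{x}/L_{X}(T,x)$ versus $2\sigma^{6}(x)V_{x}/L_{X^{\circ}}(T,x)$), and your central claim $\sqrt{h_{n}/\delta_{n,T}}\,\big(\hat\sigma^{2}_{n}(x)-\hat\sigma^{2}_{n,\circ}(x)\big)\stackrel{p}{\longrightarrow}0$ cannot hold.

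The paper avoids this trap by never changing the localization: all kernel weights, the factors $\delta S_{n,2}/h^{2}$ and $\delta S_{n,1}/h$, and the denominator of (\ref{c1}) are kept in terms of the observed process $X$ throughout, and only the squared increment is decomposed (after Lemma \ref{l4} replaces the threshold indicator) into the diffusion part plus cross and quadratic jump terms, the pieces $\mathcal{S}_{1},\dots,\mathcal{S}_{4}$ of the proof. This requires the genuinely new ingredient your proposal silently assumes away: an extension of Lemma \ref{l3} to the IA-contaminated weights, namely $\frac{1}{h}\sum_{i}K\big(\frac{X_{t_{i-1}}-x}{h}\big)\big(\frac{X_{t_{i-1}}-x}{h}\big)^{k}\delta\stackrel{p}{\longrightarrow}K_{1}^{k}L_{X}(T,x)/\sigma^{2}(x)$ with $L_{X}$ the local time of $X$ itself, which the paper proves by a two-variable Taylor expansion and the five terms $\Pi_{1_{n,T}},\dots,\Pi_{5_{n,T}}$; Lemma \ref{l3} as stated covers only the FA case, so you cannot cite it ``whether built from $X$ or from $X^{\circ}$.'' Your bookkeeping of the three increment pieces does mirror the paper's $\mathcal{S}_{2},\mathcal{S}_{3},\mathcal{S}_{4}$, though you attach the hypotheses to the wrong terms: the paper kills the cross term $\mathcal{S}_{3}$ and the quadratic term $\mathcal{S}_{4}$ with $\eta(1-\alpha/2)-1/2+\phi/2>0$ via H\"{o}lder/BDG bounds on the truncated jump moments, and uses $(1-\alpha\eta)-1/2+\phi/2>0$ for the indicator-mismatch term $\mathcal{S}_{2}$, not for a conditional bias in the cross term. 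None of this can be salvaged until the comparison-process framework and the local-time identification are abandoned in favour of the direct argument on $X$.
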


\noindent {\textbf{Corollary~2.}} Under Assumptions \ref{amu}, \ref{ak}, \ref{ah} and we also assume that:\\
(1) $\alpha < 1$ and $\int_{|x| \leq \varepsilon} x^{2} \nu(dx) =
O(\varepsilon^{2 - \alpha}),$
as $\varepsilon \rightarrow 0$;\\
(2) $h = \delta^{\phi}$ with $\frac{h_{n}^5
\hat{L}_{X}(T,x)}{\delta_{n, T}} = O_{p}(1),$ $\vartheta(\delta) =
\delta^{\eta} , \eta \in (0 , 1)$, with $\eta / 2 > \phi,~(1 -
\alpha\eta) - 1/2 + \phi/2
> 0$ and $\eta(1 - \alpha/2) - 1/2 + \phi/2 > 0,$ then we can obtain
$$\sqrt{\frac{h_{n} \hat{L}_{X}(T,x) }{\delta_{n,
T}}}\left(\hat\sigma^2_{n}(x) - \sigma^2(x) - \frac{1}{2}
\frac{(\sigma^{2})^{''}(x)[(K_{1}^{2})^{2} -
K_{1}^{1}K_{1}^{3}]}{K_{1}^{2} - (K_{1}^{1})^{2}}  \cdot h^{2}
\right) \Rightarrow \mathscr{N} \left(0 , 2 \sigma^{4}(x) \cdot
V_{x}\right).$$

\begin{remark}
For the local polynomial estimator (\ref{q2}) of order $p$ with
$\hat{\beta}_{0} = \hat\sigma^2_{n}(x)$, under Assumptions
\ref{amu}, \ref{ak}, \ref{ah} and some mild conditions for the
bandwidth $h_{n}$ and the threshold function $\vartheta(\delta)$, we
can obtain
$$\sqrt{\frac{h_{n} \hat{L}_{X}(T,x) }{\delta_{n,
T}}}\left(H (\hat{\beta} - \beta) - \frac{(\sigma^{2})^{(p + 1)}
h_{n^{p + 1}}}{(p + 1)!}S^{- 1}c_{p} \right) \Rightarrow \mathscr{N}
\left(0 , 2 \cdot \sigma^{4}(x) \cdot S^{- 1} S^{*} S^{-
1}\right),$$ where $(\sigma^{2})^{(p + 1)}$ denotes the $(p + 1)$th
derivative of $\sigma^{2}$, $\beta = (\sigma^{2},
(\sigma^{2})^{(1)}, \cdot \cdot \cdot, (\sigma^{2})^{(p)}/p!)^{T},$
$H = diag (1, h_{n}, \cdot \cdot \cdot, h_{n}^{p}),$ $S = (K^{i + j
-2}_{1})$ and $S^{*} = (K^{i + j -2}_{2})$ for $(1 \leq i \leq p +
1; 1 \leq j \leq p + 1),$ $c_{p} = (K^{p + 1}_{1}, \cdot \cdot
\cdot, K^{2p + 1}_{1})^{T}.$
\end{remark}

\begin{remark}
In Mancini and Ren\`{o} \cite{mr}, they only considered the case of
fixed time span $~T = 1$ with $\delta_{n, T} = \frac{1}{n}$ in
Theorem 3.2 and 4.1, and the convergence rate was $\sqrt{n h_{n}}$
for the stable convergence in law and $\sqrt{n h_{n}
\hat{L}_{X}(T,x)}$ for convergence in distribution. Bandi and
Phillips \cite{bp} studied the limiting distribution of the
diffusion estimator in model (\ref{dif}) for the case of time span
$T \longrightarrow \infty$ with $\delta_{n, T} = \frac{T}{n}$ in
Theorem 5, and the convergence rate was $\sqrt{\frac{h_{n}
\hat{L}_{X}(T,x) }{\delta_{n, T}}}$ for convergence in distribution.
In this paper, under two-dimensional asymptotics in both the time
span $T \longrightarrow \infty$ and the sampling interval
$\delta_{n, T} = \frac{T}{n} \longrightarrow 0,$ we derive the local
nonparametric estimator of the diffusion functions for nonstationary
model (\ref{dif}) with convergence rate of $\sqrt{\frac{h_{n}
\hat{L}_{X}(T,x) }{\delta_{n, T}}}$ for convergence in distribution.
We extend the result of Bandi and Phillips \cite{bp} to the
diffusion with jumps model (\ref{om}), especially, the infinite
activity jumps. Meanwhile, we extend the result of Mancini and
Ren\`{o} \cite{mr} in third directions: first, showing the local
polynomial approach to reduce the finite sample bias, which also
extends the result in Moloche \cite{mol} to the diffusion with
jumps, second, considering two-dimensional asymptotics in both the
time span $T \longrightarrow \infty$ and the sampling interval
$\delta_{n, T} = \frac{T}{n} \longrightarrow 0,$ third, posing weak
conditions to the bandwidth parameter $h_{n}$ not allowing for $n
h_{n}^{3} \longrightarrow 0$, which results in the precise bias
representation for the estimator of diffusion function.
\end{remark}

\begin{remark}
If posing weak conditions to the bandwidth parameter $h_{n}$ not
allowing for $n h_{n}^{3} \longrightarrow 0,$  the bias is $h_{n}
\cdot (\sigma^{2})^{'} K_{1}^{1}$ for asymmetric kernels, or
$h_{n}^{2} \cdot ((\sigma^{2})^{'} \frac{s^{'}(x)}{s(x)} +
\frac{1}{2}(\sigma^{2})^{''})K_{1}^{2}$ for symmetric kernels in
Mancini and Ren\`{o} \cite{mr}, where $s(x)$ is the natural scale
funcion, while the bias is $\frac{1}{2}
\frac{(\sigma^{2})^{''}(x)[(K_{1}^{2})^{2} -
K_{1}^{1}K_{1}^{3}]}{K_{1}^{2} - (K_{1}^{1})^{2}}  \cdot h^{2}$ in
this paper with the asymmetric kernel. Hence, the bias in the local
linear case is smaller than the one in the Nadaraya-Watson case in
comparison to the results between this paper and Mancini and
Ren\`{o} \cite{mr} whether the kernel function $K(\cdot)$ is
symmetric or not.
\end{remark}

\begin{remark}
It is very important to consider the choice of the bandwidth $h_{n}$
for the nonparametric estimation. There are many rules of thumb on
selecting the bandwidth, one can refer to Bandi, Corradi and Moloche
\cite{bcm}, Fan and Gijbels \cite{fg}, A\"it-Sahalia and Park
\cite{ap}. Here it would be nice to calculate the optimal bandwidth
based on the mean square error (MSE). The optimal bandwidth of local
threshold nonparametric estimator for model (\cite{om}) based
corollary 1 or 2 is given
$$h_{n, opt} = \left(\frac{4 \delta_{n, T} \left[K_{1}^{2} - (K_{1}^{1})^{2}\right]^{2}}
{\hat{L}_{X}(T,x) \left[(\sigma^{2})^{''}(x)\left((K_{1}^{2})^{2} -
K_{1}^{1}K_{1}^{3}\right)\right]^{2}} \right)^{\frac{1}{5}} =
O_{p}\left(\left(\frac{\delta_{n, T}}{\hat{L}_{X}(T,x)}\right)^{-
\frac{1}{5}}\right).$$ In contrary to Bandi and Nguyen \cite{bn},
they pointed out if $h_{n}^{5} \hat{L}_{X}(T,x) = O_{a.s.}(1)$, then
the features of the nonrandom bias term imply an asymptotic
mean-squared error of order $h_{n}^{4} + \frac{1}{h_{n}
\hat{L}_{X}(T,x)}$ and, in consequence, optimal bandwidth sequences
of order
$$h_{n,opt}={(\hat{L}_{X}(T,x))}^{-1/5}$$ for $\sigma^{2}(x) + \lambda(x) \mathbb{E}_{Y}[c^{2}(x, y)]$
 (P297, equation (13)) in diffusion model
with compound Poisson finite activity jumps. Hence, the optimal
bandwidth in our paper converges to zero faster than that in Bandi
and Nguyen \cite{bn} for diffusion function. To the best of our
knowledge, the optimal bandwidth are not yet derived in the context
of local threshold nonparametric inference for diffusion with jumps,
especially infinite jumps.
\end{remark}

\begin{remark}
Compared with Hanif \cite{han}, this paper considers the local
threshold nonparametric estimation for the diffusion function
$\sigma^{2}(x)$ by disentangling jumps from the observations. It
provides a new method to estimate the components of quadratic
variation separately, especially the volatility contributed by the
Brownian part. With the techniques of lemma 2.1 and 4.3, we deal
with the adaptive and predictable structure of the local
nonparametric threshold estimator conditionally on the
$\sigma-$field generated by diffusion processes, so the lemma 2.2 of
stable convergence in law can be utilized for the estimators. To
some extend, the results for the diffusion with finite and infinite
activity jumps in Theorem 1 and 2 effectively solve the conjecture
for discontinuous variations proposed in the conclusion part of Ye
et al. \cite{ye}, the two-step estimation procedure of the
volatility function in which is a part of (\ref{bias}) in this
paper.
\end{remark}

\section{The proof of main results}

We recall that $\delta = \frac{T}{n}, t_{i} = i\delta , X = Y +
J,~dY_{t}=\mu(X_{t})dt+\sigma(X_{t})dW_{t}.$~~Denote for an integer
$l$,

$\Delta_{i}\hat Y = \Delta_{i}XI_{\{(\Delta_{i}X)^2 \leq
\vartheta(\delta)\}},~\Delta_{i}\hat J =
\Delta_{i}XI_{\{(\Delta_{i}X)^2 > \vartheta(\delta)\}},$

$\Delta_{i}\hat N = I_{\{(\Delta_{i}X)^2 > \vartheta(\delta)\}} ,
\qquad K_{j}^{k}(u) = K^{j}(u)*u^{k}.$

For any bounded process Z we denote by $\bar Z =
\sup_{u\in[0,T]}|Z_{u}|.$ Throughout this article, we use $C$ to
denote a generic constant, which may vary from line to line. By
$\sigma \cdot W$ we denote the stochastic integral of $\sigma$ with
respect to $W$. We denote by $\big(\tau_{j}\big)_{j\in \mathbb{N}}$
the jump instants of $J_{t}$ and by $\tau^{(i)}$ the instant of the
first jump in $(t_{i-1},t_{i}]$, if $\Delta_{i}N \geq 1$.

Before proving our results, we first present some lemmas.

\begin{lemma}\label{l1} (Mancini and Ren\`{o} \cite{mr})
Assume that $J_{t} = \sum_{l=1}^{N_{t}}\gamma_{l}$, where $N_{t}$ is
a doubly stochastic Poisson process with an intensity process
$\lambda(X_{t-})$. If $\lambda (\cdot)$ is bounded, then uniformly
for all $i = 1, \cdots, n$,
$$P((N_{i\delta} - N_{(i-1)\delta}) \geq 1) = O(\delta),$$
$$P((N_{i\delta} - N_{(i-1)\delta}) \geq 2) = O(\delta^2).$$

\end{lemma}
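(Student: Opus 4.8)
The plan is to exploit the defining property of a doubly stochastic (Cox) Poisson process: conditionally on the realized path of the intensity, the increment $N_{i\delta} - N_{(i-1)\delta}$ is an ordinary Poisson random variable. Writing $\Lambda_{i} := \int_{(i-1)\delta}^{i\delta} \lambda(X_{u-})\,du$ for the conditional mean number of jumps on the $i$-th interval, and letting $\mathscr{G}$ denote the $\sigma$-field generated by the intensity path, the Cox structure gives
$$P\big(N_{i\delta} - N_{(i-1)\delta} = k \mid \mathscr{G}\big) = e^{-\Lambda_{i}}\frac{\Lambda_{i}^{k}}{k!}, \qquad k = 0, 1, 2, \dots.$$
First I would record the deterministic consequence of the boundedness hypothesis: if $\lambda(\cdot) \le \bar{\lambda} < \infty$, then $0 \le \Lambda_{i} \le \bar{\lambda}\,\delta$ almost surely, and crucially this upper bound is the same for every $i$.

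For the first estimate I would compute the conditional tail $P(N_{i\delta} - N_{(i-1)\delta} \ge 1 \mid \mathscr{G}) = 1 - e^{-\Lambda_{i}}$ and apply the elementary inequality $1 - e^{-x} \le x$ (valid for $x \ge 0$), giving $1 - e^{-\Lambda_{i}} \le \Lambda_{i} \le \bar{\lambda}\,\delta$. Taking expectations over $\mathscr{G}$ and using the tower property yields $P(N_{i\delta} - N_{(i-1)\delta} \ge 1) \le \bar{\lambda}\,\delta = O(\delta)$, uniformly in $i$ since the bound carries no $i$-dependence.

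For the second estimate I would proceed identically with one extra term: conditionally, $P(N_{i\delta} - N_{(i-1)\delta} \ge 2 \mid \mathscr{G}) = 1 - e^{-\Lambda_{i}}(1 + \Lambda_{i})$. The key inequality is now $1 - e^{-x}(1+x) \le x^{2}/2$ for $x \ge 0$, which I would verify by setting $g(x) := x^{2}/2 - 1 + e^{-x}(1+x)$ and observing that $g(0) = 0$ while $g'(x) = x(1 - e^{-x}) \ge 0$, so $g$ is nondecreasing and hence nonnegative on $[0,\infty)$. This bounds the conditional probability by $\Lambda_{i}^{2}/2 \le \bar{\lambda}^{2}\delta^{2}/2$, and taking expectations gives $P(N_{i\delta} - N_{(i-1)\delta} \ge 2) \le \bar{\lambda}^{2}\delta^{2}/2 = O(\delta^{2})$, again uniformly in $i$.

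The argument is essentially routine once the Cox conditioning is in place, so I do not expect a serious obstacle. The only point requiring care is the doubly stochastic feature itself: since $\lambda(X_{t-})$ is random, one must condition on the intensity path, derive a deterministic (pathwise) bound holding almost surely, and only then integrate out. It is precisely the boundedness of $\lambda$ that makes these pathwise bounds uniform in both $\omega$ and $i$, which is exactly what delivers the claimed uniformity over $i = 1, \dots, n$.
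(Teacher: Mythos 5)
Your proof is correct, but note that there is nothing in the paper to compare it against: Lemma 4.1 is stated as a quoted result from Mancini and Ren\`{o} \cite{mr} and is not proved in the paper at all. What you wrote is the standard argument behind that citation: condition on the intensity path, use the exact Poisson tail formulas, bound them with $1-e^{-x}\le x$ and $1-e^{-x}(1+x)\le x^{2}/2$ (your monotonicity verification of the second inequality is correct), and integrate out; uniformity in $i$ follows because the pathwise bound $\Lambda_{i}\le\bar{\lambda}\delta$ depends on neither $\omega$ nor $i$.

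One caveat is worth recording. Your argument invokes the literal Cox property --- conditionally on the \emph{whole} intensity path, the increment is Poisson. In this model the intensity is $\lambda(X_{t-})$, and $X$ jumps exactly when $N$ does, so the intensity path is not exogenous: conditioning on it can reveal the jump times of $N$, in which case the conditional law of the increment is no longer Poisson. The reading that is robust to this feedback is that $N$ is a counting process whose $(\mathscr{F}_{t})$-compensator is $\int_{0}^{t}\lambda(X_{s-})\,ds$, and under that reading the same bounds follow with no conditioning at all: writing $\Delta_{i}N=N_{i\delta}-N_{(i-1)\delta}$ and using Markov's inequality,
$$P(\Delta_{i}N\ge 1)\le E[\Delta_{i}N]=E\Big[\int_{(i-1)\delta}^{i\delta}\lambda(X_{s-})\,ds\Big]\le\bar{\lambda}\delta,$$
while for the second bound one uses $I_{\{k\ge 2\}}\le k(k-1)/2$ together with the compensator identity
$$E\big[\Delta_{i}N(\Delta_{i}N-1)\big]=2\,E\Big[\int_{(i-1)\delta}^{i\delta}\big(N_{s-}-N_{(i-1)\delta}\big)\lambda(X_{s-})\,ds\Big]\le 2\bar{\lambda}\int_{(i-1)\delta}^{i\delta}\bar{\lambda}\,(s-(i-1)\delta)\,ds=\bar{\lambda}^{2}\delta^{2},$$
so that $P(\Delta_{i}N\ge 2)\le\tfrac{1}{2}\bar{\lambda}^{2}\delta^{2}$, the same constants you obtained. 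Your proof is therefore valid under the lemma's own hypothesis as phrased (``doubly stochastic Poisson process''), but if the statement is meant to cover the state-dependent intensity actually used in this paper and in \cite{mr}, the compensator argument is the one that survives scrutiny.
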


\begin{lemma}\label{l2} (Mancini and Ren\`{o} \cite{mr})
Define
$\prod^{(n)} = \{\frac{i}{n}, i = 1, \cdot \cdot \cdot , n\}$ the
partitions of [0, 1] on which the sums are constructed. There exists
a subsequence $\prod^{(n_{k})} = \{\frac{i}{n_{k}}, i = 1, \cdot
\cdot \cdot , n_{k}\},$ with $\delta_{k} = \frac{1}{n_{k}},$ such
that a.s. for sufficiently small $\delta_{k},$ for all $i = 1, \cdot
\cdot \cdot , n_{k},$ on the set $\{(\Delta_{i}\tilde{J_{2}})^{2}
\leq \vartheta(\delta_{k})\},$ we have
\begin{align*}
& \sum_{s \in (t_{i-1}, t_{i}]}(\Delta_{i}\tilde{J}_{2,s})^{2} \leq 3\vartheta(\delta_{k}),~and~ \\
& \sup_{s \in (t_{i-1}, t_{i}]}\big| \Delta_{i}\tilde{J}_{2,s} \big|
\leq \sqrt{3\vartheta(\delta_{k})}.
\end{align*}

\end{lemma}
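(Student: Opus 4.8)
The plan is to reduce both assertions to a single bound on the within-interval sum of squared jumps of $\tilde{J}_{2}$ and then to produce that bound almost surely along a suitably sparse subsequence by a Borel--Cantelli argument. First I would observe that the second inequality follows from the first: since every summand is nonnegative, $\sup_{s\in(t_{i-1},t_{i}]}(\Delta\tilde{J}_{2,s})^{2}\le\sum_{s\in(t_{i-1},t_{i}]}(\Delta\tilde{J}_{2,s})^{2}$, so once the bound $\sum_{s\in(t_{i-1},t_{i}]}(\Delta\tilde{J}_{2,s})^{2}\le 3\vartheta(\delta_{k})$ is in hand, taking square roots yields $\sup_{s\in(t_{i-1},t_{i}]}|\Delta\tilde{J}_{2,s}|\le\sqrt{3\vartheta(\delta_{k})}$. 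Thus the whole lemma rests on controlling the aggregate of squared jumps inside each interval that meets the threshold event $\{(\Delta_{i}\tilde{J}_{2})^{2}\le\vartheta(\delta_{k})\}$.

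Next I would exploit that $\alpha<1$ forces $\tilde{J}_{2}$ to have finite variation, so $\sum_{s\le T}(\Delta\tilde{J}_{2,s})^{2}<\infty$ a.s.\ and the large jumps are locally finite. The guiding principle is that the threshold event rules out intervals carrying an isolated large jump: if some jump in $(t_{i-1},t_{i}]$ had magnitude bounded away from zero while the remaining jumps in that interval contributed only a small total variation, then the net increment $\Delta_{i}\tilde{J}_{2}$ would itself be bounded away from zero, contradicting $(\Delta_{i}\tilde{J}_{2})^{2}\le\vartheta(\delta_{k})\to 0$. Hence, on the threshold event, each admissible interval carries only ``small'' jumps, and the remaining task is purely to bound the sum of their squares.

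The decisive estimate is therefore probabilistic. I would fix a geometrically spaced subsequence $n_{k}$ and bound, by a union bound over the $n_{k}$ intervals, the probability of the bad event that some interval simultaneously satisfies $(\Delta_{i}\tilde{J}_{2})^{2}\le\vartheta(\delta_{k})$ yet $\sum_{s\in(t_{i-1},t_{i}]}(\Delta\tilde{J}_{2,s})^{2}>3\vartheta(\delta_{k})$. Here the moment hypothesis $\int_{|x|\le\varepsilon}x^{2}\nu(dx)=O(\varepsilon^{2-\alpha})$ supplies the quantitative control: the expected within-interval quadratic variation of the small jumps over an interval of length $\delta_{k}$ is of order $\delta_{k}$ (and after truncation at scale $\sqrt{\vartheta(\delta_{k})}$ even of the smaller order $\delta_{k}\,\vartheta(\delta_{k})^{1-\alpha/2}$), which is $o(\vartheta(\delta_{k}))$ because $\delta_{k}\ln(1/\delta_{k})/\vartheta(\delta_{k})\to 0$. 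Consequently a single interval can carry squared-jump mass exceeding $3\vartheta(\delta_{k})$ only through a large deviation of the small-jump quadratic variation or through a coincidence of two sizeable jumps. I would make $n_{k}\cdot\max_{i}P(\text{bad on interval }i)$ summable in $k$ along the sparse subsequence and then invoke Borel--Cantelli to conclude that, a.s., the bad event occurs only finitely often, which gives the claimed bound for all sufficiently small $\delta_{k}$.

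The hard part will be this last step: a small net increment $\Delta_{i}\tilde{J}_{2}$ does not by itself bound the within-interval sum of squared jumps, because jumps of opposite sign may cancel in the increment while still contributing large quadratic variation. Controlling this cancellation is exactly where the slack factor $3$ and the finite-variation structure (via $\alpha<1$ together with the $O(\varepsilon^{2-\alpha})$ moment bound) must be combined. The delicate bookkeeping is to split the small jumps into a truncated compensated martingale part, handled by a maximal (Doob) inequality over the interval, and a drift-type remainder controlled by the a.s.\ finite total variation, and then to verify that the resulting tail probabilities decay fast enough that the union bound over the $n_{k}$ intervals remains summable along the chosen subsequence. This is the only place where passing to a subsequence (rather than the full sequence) is genuinely needed.
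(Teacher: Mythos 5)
You should first be aware that the paper contains no proof of this lemma at all: it is imported verbatim, with attribution, from Mancini and Ren\`{o} \cite{mr}, so the only meaningful review is of your argument's correctness against what any proof must accomplish. Your skeleton is the right one, and in spirit it is the standard one: the second inequality does follow trivially from the first (each squared jump is bounded by the sum of squared jumps); the statement must be proved by bounding the probability of the bad event $B_{i}=\{(\Delta_{i}\tilde{J}_{2})^{2}\leq\vartheta(\delta)\}\cap\{\sum_{s\in(t_{i-1},t_{i}]}(\Delta\tilde{J}_{2,s})^{2}>3\vartheta(\delta)\}$, taking a union bound over the $\sim\delta^{-1}$ intervals, and extracting the subsequence via Borel--Cantelli --- which is exactly where the subsequence in the statement comes from; and the Blumenthal--Getoor moment condition $\int_{|x|\leq\varepsilon}x^{2}\nu(dx)=O(\varepsilon^{2-\alpha})$ is indeed the quantitative input. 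You also correctly identify cancellation as the reason the statement is not a pathwise triviality.

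The genuine gap sits precisely at the step you call the hard part, and the instrument you name there (a Doob/moment maximal inequality for the truncated compensated martingale, plus Markov for the quadratic variation) cannot close it. The critical sub-case is an interval containing \emph{no} jump larger than $\sqrt{\vartheta}$ in which the small-jump quadratic variation $Q_{i}=\sum_{s}(\Delta\tilde{J}_{2,s})^{2}I_{\{|\Delta\tilde{J}_{2,s}|\leq\sqrt{\vartheta}\}}$ alone exceeds $3\vartheta$; here the threshold event and cancellation are irrelevant, and the estimate must come purely from the tail of $Q_{i}$. Every fixed-order moment bound is dominated by the single-moderate-jump contribution: $E[Q_{i}^{p}]\gtrsim\delta\int_{|x|\leq\sqrt{\vartheta}}x^{2p}\nu(dx)\asymp\delta\,\vartheta^{\,p-\alpha/2}$ for stable-like $\nu$, so Markov at any fixed order $p$ yields only $P(Q_{i}>3\vartheta)=O(\delta\,\vartheta^{-\alpha/2})=O(\delta^{1-\eta\alpha/2})$, and after the union bound over $n_{k}\approx\delta_{k}^{-1}$ intervals the resulting bound is $O(\delta_{k}^{-\eta\alpha/2})$, which for $\alpha>0$ exceeds $1$ and is vacuous for \emph{every} $k$; passing to a sparser subsequence cannot repair a bound that fails before summability is even in question. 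The true probability is far smaller, of order $(\delta\,\vartheta^{-\alpha/2})^{3}$ or better, because the event requires at least four jumps of magnitude comparable to $\sqrt{\vartheta}$ (or a still less likely conspiracy of smaller ones); but seeing this requires either Chernoff--Bernstein bounds built from $\exp(\lambda x^{2})$ with $\lambda\asymp t/\vartheta$ (equivalently, moments of order growing like $\ln(1/\delta)$) together with a truncation level shrinking relative to $\sqrt{\vartheta}$, or a dyadic Poisson jump-counting argument across size levels. That estimate is the actual content of the lemma. By contrast, your other sub-cases do survive crude bounds: two jumps exceeding $\sqrt{\vartheta}$ has per-interval probability $O(\delta^{2}\vartheta^{-\alpha})$, and a large jump cancelled by the net small-jump sum carries an independent extra factor $O(\delta\,\vartheta^{-\alpha/2})$, so both give $O(\delta^{1-\eta\alpha})$ after the union bound, which tends to zero and is summable along a geometric subsequence since $\eta\alpha<1$.
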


\begin{lemma}\label{l3}
Under Assumptions \ref{amu}, \ref{ak}, \ref{ah}, we have
\begin{equation}
\label{3.3 }
\frac{1}{h}\sum_{i=1}^{n}K\big(\frac{X_{t_{i-1}}-x}{h}\big)\big(\frac{X_{t_{i-1}}-x}{h}\big)^{k}\delta~
\stackrel{\mathrm{a.s.}}{\longrightarrow}~\frac{K_{1}^{k}L_{X}(T,x)}{\sigma^2(x)}
\end{equation} where
$K_{1}^{k}L_{X}(T,x)~=~L_{X}(T,x)\int_{R_{+}}K(u)u^kdu$ for all $x$,
as $\delta~ and ~h \to0.$\end{lemma}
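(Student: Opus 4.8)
The plan is to recognize the left-endpoint sum as a Riemann approximation of the continuous functional $\frac{1}{h}\int_0^T K_1^k\big(\frac{X_s-x}{h}\big)\,ds$ (writing $K_1^k(u)=K(u)u^k$ for the integrand as a function of $u$, consistent with the notation $K_j^k(u)=K^j(u)\ast u^k$), to evaluate that integral through the occupation time formula, and to control the discretization error separately. First I would set $\Phi_h(y) := \frac{1}{h}K_1^k\big(\frac{y-x}{h}\big)$, write the target sum as $S_n = \sum_{i=1}^n \Phi_h(X_{t_{i-1}})\,\delta$ and the reference integral as $I_n = \int_0^T \Phi_h(X_s)\,ds$, so that the statement splits into the convergence $I_n \to \frac{K_1^k L_X(T,x)}{\sigma^2(x)}$ and the negligibility of $S_n - I_n$.

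For the convergence of $I_n$, since $X$ has a finite-variation jump part (in the FA case, and in the IA case under $\alpha<1$), the local time $L_X(T,\cdot)$ exists by Protter and the continuous part of the quadratic variation of $X$ equals that of $Y$, namely $\int_0^\cdot \sigma^2(X_s)\,ds$. The occupation time formula then gives $I_n = \int_{\mathbb{R}} \Phi_h(a)\,\frac{L_X(T,a)}{\sigma^2(a)}\,da$, and the change of variables $a = x + hu$ turns this into $\int_{\mathbb{R}_{+}} K(u)u^k\,\frac{L_X(T,x+hu)}{\sigma^2(x+hu)}\,du$, using that $K$ is supported on $\mathbb{R}_{+}$. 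Because $K$ has bounded compact support, $\sigma^2$ is continuous and strictly positive by Assumption \ref{amu}, and $a \mapsto L_X(T,a)$ is c\`adl\`ag, dominated convergence lets $h\to 0$ inside the integral and yields $\frac{L_X(T,x)}{\sigma^2(x)}\int_{\mathbb{R}_{+}} K(u)u^k\,du = \frac{K_1^k L_X(T,x)}{\sigma^2(x)}$.

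The main work, and the main obstacle, is showing $S_n - I_n \to 0$ almost surely while $h$ shrinks together with $\delta$. I would bound $|S_n - I_n| \le \sum_{i=1}^n \int_{t_{i-1}}^{t_i} |\Phi_h(X_{t_{i-1}}) - \Phi_h(X_s)|\,ds$. Since $K_1^k$ is continuously differentiable with bounded derivative on its compact support, it is globally Lipschitz, whence $|\Phi_h(X_{t_{i-1}})-\Phi_h(X_s)| \le \frac{C}{h^2}\,|X_s - X_{t_{i-1}}|$. Two features keep this error small: only indices with $X_{t_{i-1}}$ within $O(h)$ of $x$ contribute, and the total length of the corresponding subintervals is of order $h\,L_X(T,x)/\sigma^2(x)$; meanwhile the oscillation $\sup_{s\in(t_{i-1},t_i]}|Y_s - Y_{t_{i-1}}|$ of the continuous part is, by L\'evy's modulus of continuity, almost surely $O\big(\sqrt{\delta \ln(1/\delta)}\big)$ uniformly in $i$ for small $\delta$. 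Combining these gives $|S_n - I_n| = O\big(\sqrt{\delta \ln(1/\delta)}/h\big) = O\big(\sqrt{\delta \ln(1/\delta)/h^2}\big)$, which tends to $0$ by Assumption \ref{ah}. The remaining care is the jump contribution: in the FA case Lemma \ref{l1} confines jumps to finitely many subintervals of total length $O(\delta)$, negligible after division by $h$; in the IA case the finite-variation small-jump oscillation is controlled as in Lemma \ref{l2}, so the same modulus-of-continuity estimate survives along the almost sure subsequence. Assembling the two pieces yields the claimed almost sure limit.
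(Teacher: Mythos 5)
Your proposal follows essentially the same route as the paper's proof: the occupation time formula (after the change of variables $a=x+hu$ and dominated convergence) for the time integral, and for the Riemann-sum error the mean-value/Lipschitz bound of order $h^{-2}$ combined with the a.s. modulus of continuity $O\big(\sqrt{\delta\ln(1/\delta)}\big)$ and the occupation-time localization that only $O(h)$ of total time is spent where the compactly supported kernel (or its derivative) is nonzero — exactly the mechanism the paper implements by applying the occupation time formula to $|{K_{1}^{k}}^{'}|$ — while the finitely many FA-jump intervals contribute $O_{a.s.}(N_{T}\delta/h)\to 0$. The one discrepancy is your closing claim that the almost sure convergence extends to the IA case via Lemma \ref{l2}: that lemma controls the small jumps only on the threshold sets $\{(\Delta_{i}\tilde{J_{2}})^{2}\leq\vartheta(\delta_{k})\}$ and only along a subsequence, whereas the sum in this lemma carries no threshold indicator; accordingly the paper proves the lemma only for FA jumps and obtains the IA analogue (\ref{4.9}) later, in the proof of Theorem \ref{th2}, and only in probability, so that remark should be weakened or dropped.
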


\begin{proof}

 For simplicity,  we set $T=1$.  Write
\begin{eqnarray*}
& ~ & \frac{1}{h}\sum_{i=1}^{n}K\big(\frac{X_{t_{i-1}}-x}{h}\big)\big(\frac{X_{t_{i-1}}-x}{h}\big)^{k}\delta\\
& = & \frac{1}{h}\int_{0}^{1}{{K\big(\frac{X_{s-} -
x}{h}\big)}\big(\frac{X_{s-} -
x}{h}\big)^{k}}ds\\
& + & \frac{1}{h}\Big(\sum_{i=1}^{n}
K\big(\frac{X_{t_{i-1}}-x}{h}\big)\big(\frac{X_{t_{i-1}}-x}{h}\big)^{k}\delta
- \int_{0}^{1}{{K\big(\frac{X_{s-} - x}{h}\big)}\big(\frac{X_{s-} -
x}{h}\big)^{k}}ds \Big).
\end{eqnarray*}

By the occupation time formula,
\begin{eqnarray*}
& ~ & \frac{1}{h}\int_{0}^{1}{{K\big(\frac{X_{s-} - x}{h}\big)}\big(\frac{X_{s-} - x}{h}\big)^{k}}ds\\
& = & \frac{1}{h}\int_{0}^{1}{{K\big(\frac{X_{s-} -
x}{h}\big)}\big(\frac{X_{s-} -
x}{h}\big)^{k}}{\frac{d[X]_{s}^{c}}{\sigma^{2}(X_{s-})}}\\
& = & \frac{1}{h}\int_{\mathbb{R}}{K\big(\frac{a-x}{h}\big)\big(\frac{a-x}{h}\big)^{k}\frac{L(a)}{\sigma^{2}(a)}}da\\
& = &
\int_{\mathbb{R}_{+}}{K(u)u^{k}\frac{L(uh+x)}{\sigma^{2}(uh+x)}}du +
\int_{\mathbb{R}_{-}}{K(u)u^{k}\frac{L(uh+x)}{\sigma^{2}(uh+x)}}du,
\end{eqnarray*}
which converges to $\frac{K_{1}^{k}L_{X}(T,x)}{\sigma^2(x)}$ almost surly.

For each $n$,  we define the random sets
$$I_{0,n} = \{i\in \{1 , . . .  , n\}: \Delta_{i}N = 0\},$$ and
$$I_{1,n} = \{i\in \{1 , . . .  , n\}: \Delta_{i}N \neq 0\}.$$

Then
\begin{eqnarray*}
& ~ & \frac{1}{h}\Big(\sum_{i=1}^{n}
K\big(\frac{X_{t_{i-1}}-x}{h}\big)\big(\frac{X_{t_{i-1}}-x}{h}\big)^{k}\delta
- \int_{0}^{1}{{K\big(\frac{X_{s-} - x}{h}\big)}\big(\frac{X_{s-} -
x}{h}\big)^{k}}ds \Big) \\
& = & \frac{1}{h}\sum_{i \in
I_{0,n}}^{}\int_{t_{i-1}}^{t_{i}}{\Big(K\big(\frac{X_{t_{i-1}}-x}{h}\big)\cdot\big(\frac{X_{t_{i-1}}
- x}{h}\big)^{k} - {{K\big(\frac{X_{s-} -
x}{h}\big)}\big(\frac{X_{s-} -
x}{h}\big)^{k}}\Big)}ds \\
 & + & \frac{1}{h}\sum_{i \in
I_{1,n}}^{}\int_{t_{i-1}}^{t_{i}}{\Big(K\big(\frac{X_{t_{i-1}}-x}{h}\big)\cdot\big(\frac{X_{t_{i-1}}
- x}{h}\big)^{k} - {{K\big(\frac{X_{s-} -
x}{h}\big)}\big(\frac{X_{s-} - x}{h}\big)^{k}}\Big)}ds.
\end{eqnarray*}

Noticing that $K(\cdot)$ is bounded supported,
$\frac{1}{h}\sum_{i \in I_{1,n}}\int_{t_{i-1}}^{t_{i}}K\big(\frac{X_{s-}-x}{h}\big)\big(\frac{X_{s-}-x}{h}\big)^{k}ds $ is dominated by $N_{1}\frac{2C\delta}{h}
\stackrel{a.s.}{\longrightarrow} 0$.

$\frac{1}{h}\sum_{i \in
I_{1,n}}^{}\int_{t_{i-1}}^{t_{i}}{\Big(K\big(\frac{X_{t_{i-1}}-x}{h}\big)\cdot\big(\frac{X_{t_{i-1}}
- x}{h}\big)^{k}}ds$ can be written using the mean-value theorem,
and it is a.s. dominated by
\begin{equation}
\label{4.2}
\frac{1}{h}\sum_{i \in
I_{0,n}}\int_{t_{i-1}}^{t_{i}}{\Big|{K_{1}^{k}}^{'}\big(\frac{\tilde{X}_{is}
- x}{h}\big)\Big|\Big|\frac{X_{s} - X_{t_{i-1}}}{h}\Big|}ds\end{equation}
where $\tilde{X}_{is}$ is some point between $X_{i\delta}$
and $X_{s}$ for $i \in I_{0,n}$. Using the property of uniform
boundedness of the increments of X paths when J $\equiv$ 0
(indicated as the UBI property),~(\ref{4.2}) can be a.s. dominated by
\begin{equation}
\label{4.3}
\frac{1}{h}\sum_{i \in
I_{0,n}}\int_{t_{i-1}}^{t_{i}}{\Big|{K_{1}^{k}}^{'}\big(\frac{\tilde{X}_{is}
- x}{h}\big)\Big|\frac{(\delta
ln{\frac{1}{\delta}})^\frac{1}{2}}{h}}ds\end{equation}
 Since $\frac{1}{h}\sum_{i \in
I_{1,n}}\int_{t_{i-1}}^{t_{i}}{\Big|{K_{1}^{k}}^{'}\big(\frac{\tilde{X}_{is}
- x}{h}\big)\Big|\frac{(\delta
ln{\frac{1}{\delta}})^\frac{1}{2}}{h}}ds \leq CN_{1}\frac{(\delta
ln{\frac{1}{\delta}})^\frac{1}{2}}{h}\frac{\delta}{h}
\stackrel{a.s.}{\longrightarrow} 0$, so (\ref{4.3}) has  the same
limit as
\begin{eqnarray*}
& ~ & C\frac{(\delta
ln{\frac{1}{\delta}})^\frac{1}{2}}{h^{2}}\sum_{i =
1}^{n}\int_{t_{i-1}}^{t_{i}}{\Big|{K_{1}^{k}}^{'}\big(\frac{\tilde{X}_{is}
- x}{h}\big)\Big|}ds\\ & = & C\frac{(\delta
ln{\frac{1}{\delta}})^\frac{1}{2}}{h^{2}}\sum_{i =
1}^{n}\int_{t_{i-1}}^{t_{i}}{\Big|{K_{1}^{k}}^{'}\Big(\frac{X_{s-} -
x}{h} + O_{a.s.}(\frac{(\delta
ln{\frac{1}{\delta}})^\frac{1}{2}}{h})\Big)\Big|}ds\\
& = & C\frac{(\delta
ln{\frac{1}{\delta}})^\frac{1}{2}}{h^{2}}\int_{-\infty}^{+\infty}{\Big|{K_{1}^{k}}^{'}\big(\frac{p-x}{h}
+ o_{a.s.}(1)\big)\Big| \frac{L_{X}(T , p)}{\sigma^{2}(p)}} dp\\
& = & C\frac{(\delta
ln{\frac{1}{\delta}})^\frac{1}{2}}{h}\int_{-\infty}^{+\infty}{\Big|{K_{1}^{k}}^{'}\big(q
+ o_{a.s.}(1)\big)\Big| \frac{L_{X}(T , hq + x)}{\sigma^{2}(hq + x)}} dq\\
& \leq & C\frac{(\delta
ln{\frac{1}{\delta}})^\frac{1}{2}}{h}O_{a.s.}\big(\frac{L_{X}(T ,
x)}{\sigma^{2}(x)}\big) = o_{a.s.}(1).
\end{eqnarray*}
The inequality follows from (using the Cauchy-Schwarz
inequality and $K(\cdot)$ bounded support denoted as  $[0 , C]$)
\begin{eqnarray*}
\int_{0}^{C}{\big|{K_{1}^{k}}^{'}(u)\big|}du & \leq &
\int_{0}^{C}{ku^{k-1}K(u)}du + \int_{0}^{C}{u^{k}K^{'}(u)}du\\ &
\leq & \big(\int_{0}^{C}{|u^{k-1}|^{2}}du \cdot
\int_{0}^{C}{(kK(u))^{2}}du\big)^{\frac{1}{2}} +
\big(\int_{0}^{C}{|u^{k}|^{2}}du \cdot
\int_{0}^{C}{(K^{'}(u))^{2}}du\big)^{\frac{1}{2}}\\ & < & \infty.
\end{eqnarray*}

Thus
$$\frac{1}{h}\Big(\sum_{i=1}^{n}
K\big(\frac{X_{t_{i-1}}-x}{h}\big)\big(\frac{X_{t_{i-1}}-x}{h}\big)^{k}\delta
- \int_{0}^{1}{{K\big(\frac{X_{s-} - x}{h}\big)}\big(\frac{X_{s-} -
x}{h}\big)^{k}}ds \Big) \stackrel{a.s.}{\rightarrow} 0.$$
We obtain this lemma.
\end{proof}

\begin{lemma}\label{l4}
Under Assumptions \ref{amu}, \ref{ak}, \ref{ah}  we have
\begin{equation} \label{3.4 }
\sqrt{\frac{n}{h}}\sum_{i=1}^{n}K\big(\frac{X_{t_{i-1}}-x}{h}\big)\big(\frac{X_{t_{i-1}}-x}{h}\big)^{k}
(\Delta_{i}X)^{2}\Big| I_{\{(\Delta_{i}X)^{2} \leq
\vartheta(\delta_{k})\}}  - I_{\{(\Delta_{i}\tilde{J_{2}})^{2} \leq
 4\vartheta(\delta_{k}),~\Delta_{i}N = 0\}} \Big| \stackrel{p} \longrightarrow 0\end{equation}
as $\delta \rightarrow 0$.  \end{lemma}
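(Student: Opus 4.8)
The plan is to prove the weighted sum is $o_p(1)$ by splitting the indices into $I_{0,n}=\{i:\Delta_i N=0\}$ and $I_{1,n}=\{i:\Delta_i N\neq0\}$ and analysing, on $I_{0,n}$, the symmetric difference of the two threshold events. Since the pathwise estimates I need (the UBI property from the proof of Lemma \ref{l3}, and Lemma \ref{l2}) hold almost surely only along the subsequence of Lemma \ref{l2}, I would first invoke the subsequence principle: to get convergence in probability it suffices to show that every subsequence admits a further subsequence on which the control holds, so that those a.s. estimates are available. Throughout I write $\Delta_i X=\Delta_i Y+\Delta_i\tilde{J}_2$ on $I_{0,n}$ and recall $\sup_i|\Delta_i Y|=O_{a.s.}\big((\delta\ln\tfrac1\delta)^{1/2}\big)=o(\sqrt{\vartheta(\delta)})$, since hypothesis (1) forces $\vartheta(\delta)\gg\delta\ln\tfrac1\delta$.

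On $I_{1,n}$ the second indicator vanishes, so each summand reduces to $(\Delta_i X)^2 I_{\{(\Delta_i X)^2\le\vartheta(\delta)\}}\le\vartheta(\delta)$; as $K$ and its polynomial factor are bounded on the compact support and the total jump count $N_T$ is a.s. finite by Lemma \ref{l1}, this block is bounded by $C\sqrt{n/h}\,\vartheta(\delta)\,N_T\to0$ once $\sqrt{n/h}\,\vartheta(\delta)\to0$, which holds under the stated rate conditions.

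On $I_{0,n}$ the difference of indicators is supported on the symmetric difference of $\{(\Delta_i X)^2\le\vartheta\}$ and $\{(\Delta_i\tilde{J}_2)^2\le4\vartheta\}$. The part $\{(\Delta_i X)^2\le\vartheta,\ (\Delta_i\tilde{J}_2)^2>4\vartheta\}$ forces $|\Delta_i Y|\ge|\Delta_i\tilde{J}_2|-|\Delta_i X|>\sqrt{\vartheta}$, which contradicts the UBI bound for $\delta$ small, so it is a.s. eventually empty. On the complementary part $\{(\Delta_i X)^2>\vartheta,\ (\Delta_i\tilde{J}_2)^2\le4\vartheta\}$ one has $|\Delta_i\tilde{J}_2|\ge\sqrt{\vartheta}-|\Delta_i Y|\ge\tfrac12\sqrt{\vartheta}$ and $(\Delta_i X)^2\le(|\Delta_i Y|+|\Delta_i\tilde{J}_2|)^2=O(\vartheta)$. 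Hence, eventually a.s., the whole $I_{0,n}$ block is dominated by
$$T_n:=C\sqrt{\tfrac{n}{h}}\,\vartheta(\delta)\sum_{i=1}^{n}K\big(\tfrac{X_{t_{i-1}}-x}{h}\big)\Big|\big(\tfrac{X_{t_{i-1}}-x}{h}\big)^{k}\Big|\,I_{\{|\Delta_i\tilde{J}_2|\ge\frac12\sqrt{\vartheta(\delta)}\}}.$$

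The main obstacle is to show $T_n\stackrel{p}{\longrightarrow}0$. I would condition on $\mathscr{F}_{t_{i-1}}$, exploit that $\Delta_i\tilde{J}_2$ is independent of the past with stationary law, and estimate $P(|\Delta_i\tilde{J}_2|\ge\tfrac12\sqrt{\vartheta})=O(\delta\,\vartheta^{-\alpha/2})$; this follows from hypothesis (1), $\int_{|x|\le\varepsilon}x^2\nu(dx)=O(\varepsilon^{2-\alpha})$, which by dyadic summation yields $\nu\{|x|\ge\varepsilon\}=O(\varepsilon^{-\alpha})$ for the genuine-jump contribution, together with a Chebyshev bound for the compensated small-jump accumulation. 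Combining this with $\sum_{i}K(\cdot)=O_p(h/\delta)$ from Lemma \ref{l3} gives $T_n=O_p\big(\sqrt{n/h}\cdot h\cdot\vartheta^{1-\alpha/2}\big)=O_p(\sqrt{nh}\,\vartheta^{1-\alpha/2})$. With $n=T/\delta$, $h=\delta^{\phi}$, $\vartheta=\delta^{\eta}$ this is $O_p\big(\delta^{\eta(1-\alpha/2)-1/2+\phi/2}\big)$, which tends to $0$ exactly under the hypothesis $\eta(1-\alpha/2)-1/2+\phi/2>0$; Markov's inequality then delivers the in-probability convergence, and the three blocks combine to give the claim. The subtle point is that the crude count of moderate infinite-activity jumps is far too large on its own; it is the kernel localization, quantified through the occupation-time (local-time) identity of Lemma \ref{l3}, that supplies the decisive extra factor $h$ making the rate condition suffice.
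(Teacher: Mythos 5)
Your proposal is correct in outline and, at the decisive step, takes a genuinely more self-contained route than the paper. The outer skeleton coincides: both arguments split the indices according to whether a finite-activity jump occurs, bound the $\Delta_i N\neq 0$ block by $C N_1 \sqrt{n/h}\,\vartheta(\delta)$, and use the UBI bound $\sup_i|\Delta_i Y| = O_{a.s.}\big((\delta\ln(1/\delta))^{1/2}\big) = o\big(\sqrt{\vartheta(\delta)}\big)$ to pass between the two threshold events. But where the paper stops --- it disposes of the remaining indicator difference with a bare citation, saying it ``can be similarly proved using lemma 4.2 as the technical details for Lemma 3 in Mancini and Ren\`o \cite{mr} with $K(\cdot)(\cdot)^{k}$ instead of $K(\cdot)$'' --- you actually carry out the estimate: on $I_{0,n}$ the symmetric difference of the two events forces $|\Delta_i\tilde{J}_2|\geq\frac12\sqrt{\vartheta}$ together with $(\Delta_i X)^2=O(\vartheta)$; the moderate-jump probability is then controlled distributionally by $P\big(|\Delta_i\tilde{J}_2|\geq\frac12\sqrt{\vartheta}\big)=O(\delta\,\vartheta^{-\alpha/2})$ (your derivation of $\nu\{|x|\geq\varepsilon\}=O(\varepsilon^{-\alpha})$ by dyadic summation, plus Chebyshev for the compensated small jumps, is correct); and the kernel localization $\sum_i K_i=O_p(h/\delta)$ from Lemma \ref{l3} yields the rate $O_p\big(\delta^{\eta(1-\alpha/2)-1/2+\phi/2}\big)$, which is exactly the third hypothesis of Theorem \ref{th2}. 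This buys two things the paper's citation does not: it makes transparent why precisely that rate condition appears in the theorem, and, because your control is in probability rather than pathwise, it bypasses the along-a-subsequence machinery of Lemma \ref{l2} entirely (your subsequence-principle preamble is in fact only needed for the UBI bound, which holds a.s. along the full sequence anyway).

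Two soft spots should be flagged, both inherited from the paper rather than introduced by you. First, the step $T_n=O_p\big(\sqrt{n/h}\cdot\vartheta\cdot(h/\delta)\cdot\delta\vartheta^{-\alpha/2}\big)$ multiplies a conditional-probability bound by an $O_p(h/\delta)$ bound on the correlated random weights $\sum_iK_i$; to make this airtight, insert the $\mathscr{F}_{t_{i-1}}$-measurable truncations $A_{n,i}=\{\frac{\delta}{h}\sum_{j\leq i}K_j|\cdot|^{k}\leq M\}$, so that $E\big[\sum_i K_i|\cdot|^{k} I_{\{|\Delta_i\tilde{J}_2|\geq\frac12\sqrt{\vartheta}\}}I_{A_{n,i}}\big]\leq (Mh/\delta + C)\,O(\delta\vartheta^{-\alpha/2})$, then let $M\to\infty$ after $n\to\infty$; the paper's own treatment of $\mathcal{S}_2$--$\mathcal{S}_4$ in the proof of Theorem \ref{th2} has the identical lacuna. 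Second, the claim $\sqrt{n/h}\,\vartheta(\delta)\to 0$ for the finite-activity block requires $\eta>1/2+\phi/2$, which does not follow from the hypotheses of Theorem \ref{th2}: taking $\alpha$ close to $0$, $\phi=1/5$, $\eta=0.55$ satisfies $\eta/2>\phi$, $(1-\alpha\eta)-1/2+\phi/2>0$ and $\eta(1-\alpha/2)-1/2+\phi/2>0$, yet $\sqrt{n/h}\,\vartheta=\delta^{\eta-1/2-\phi/2}\to\infty$. The paper asserts the identical bound $CN_1\sqrt{n/h}\cdot\vartheta(\delta_k)\to0$, so this is a defect of the lemma as stated (it is given under Assumptions \ref{amu}--\ref{ah} only, with no condition tying $\vartheta$ to $h$ and $\delta$); a clean repair for both proofs is to treat the $\Delta_iN\neq0$ intervals as in Mancini \cite{m1} and Mancini--Ren\`o \cite{mr}, showing the threshold is a.s. eventually exceeded on them, so that this block vanishes identically rather than being bounded by $N_1\sqrt{n/h}\,\vartheta$.
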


\begin{proof}
Here,  we consider the case of $J_{t} =J_{1t} + \tilde{J}_{2t}.$

On $\{(\Delta_{i}X)^{2} \leq \vartheta(\delta_{k})\},$ we have
$|\Delta_{i}J| - |\Delta_{i}Y| \leq |\Delta_{i}X| \leq
\sqrt{\vartheta(\delta_{k})},$ and by UBI property of $Y_{t}$, for
small $\delta_{k},~ |\Delta_{i}J| \leq
2\sqrt{\vartheta(\delta_{k})},$
\begin{eqnarray*}
& ~ & \lim_{\delta_{k} \rightarrow 0}
\sqrt{\frac{n}{h}}\sum_{i=1}^{n}K\big(\frac{X_{t_{i-1}}-x}{h}\big)\big(\frac{X_{t_{i-1}}-x}{h}\big)^{k}
(\Delta_{i}X)^{2} I_{\{(\Delta_{i}X)^{2}
 \leq \vartheta(\delta_{k})\}}\\
& \leq & \lim_{\delta_{k} \rightarrow 0}
\sqrt{\frac{n}{h}}\sum_{i=1}^{n}K\big(\frac{X_{t_{i-1}}-x}{h}\big)\big(\frac{X_{t_{i-1}}-x}{h}\big)^{k}
(\Delta_{i}X)^{2} I_{\{(\Delta_{i}J)^{2}
 \leq 4\vartheta(\delta_{k})\}}.
\end{eqnarray*}
However, by the bounded support of $K(\cdot)$ and the UBI
property,
\begin{eqnarray*}
& ~ &
\sqrt{\frac{n}{h}}\sum_{i=1}^{n}K\big(\frac{X_{t_{i-1}}-x}{h}\big)\big(\frac{X_{t_{i-1}}-x}{h}\big)^{k}
(\Delta_{i}X)^{2} I_{\{(\Delta_{i}J)^{2}
 \leq 4\vartheta(\delta_{k}),~\Delta_{i}N \neq 0\}}\\
& \leq & C N_{1} \sqrt{\frac{n}{h}} \cdot \vartheta(\delta_{k})
\rightarrow 0,
\end{eqnarray*}
where $N_{1}$ denotes the number of jumps in [0, 1], thus,
\begin{eqnarray*}
& ~ & \lim_{\delta_{k} \rightarrow 0}
\sqrt{\frac{n}{h}}\sum_{i=1}^{n}K\big(\frac{X_{t_{i-1}}-x}{h}\big)\big(\frac{X_{t_{i-1}}-x}{h}\big)^{k}
(\Delta_{i}X)^{2} I_{\{(\Delta_{i}X)^{2}
 \leq \vartheta(\delta_{k})\}}\\
& \leq & \lim_{\delta_{k} \rightarrow 0}
\sqrt{\frac{n}{h}}\sum_{i=1}^{n}K\big(\frac{X_{t_{i-1}}-x}{h}\big)\big(\frac{X_{t_{i-1}}-x}{h}\big)^{k}
(\Delta_{i}X)^{2} I_{\{(\Delta_{i}J)^{2}
 \leq 4\vartheta(\delta_{k}),~\Delta_{i}N = 0 \}}\\
& = & \lim_{\delta_{k} \rightarrow 0}
\sqrt{\frac{n}{h}}\sum_{i=1}^{n}K\big(\frac{X_{t_{i-1}}-x}{h}\big)\big(\frac{X_{t_{i-1}}-x}{h}\big)^{k}
(\Delta_{i}X)^{2} I_{\{(\Delta_{i}\tilde{J}_{2})^{2}
 \leq 4\vartheta(\delta_{k}),~\Delta_{i}N = 0 \}}.
\end{eqnarray*}

\noindent It is sufficient to prove
$$\lim_{\delta_{k} \rightarrow 0}\sqrt{\frac{n}{h}}\sum_{i=1}^{n}K\big(\frac{X_{t_{i-1}}-x}{h}\big)\big(\frac{X_{t_{i-1}}-x}{h}\big)^{k}
(\Delta_{i}X)^{2} \Big(I_{\{(\Delta_{i}\tilde{J})^{2}
 \leq 4\vartheta(\delta_{k}),~\Delta_{i}N = 0 \}} - I_{\{(\Delta_{i}X)^{2}
 \leq \vartheta(\delta_{k})\}} \Big) = 0$$
which can be similarly proved using lemma 4.2 as the technical
details for Lemma 3 in Mancini and Ren\`{o} (\cite{mr}) with $K(
\cdot )( \cdot )^{k}$ instead of $K( \cdot )$.
\end{proof}

\subsection{The proof of Theorem \ref{th1}}
Set $T = 1$ and $\sigma(X_{s}) =:
\sigma_{s}, \sigma(X_{(i-1)\delta}) =: \sigma_{i-1}$. Theorem
1 in Mancini (\cite{m1}) means that  $I_{\{(\Delta_{i}X)^2 \leq
\vartheta(\delta)\}}(\omega) = I_{\{\Delta_{i}N=0\}}(\omega)$), then
\begin{eqnarray*}
& &\sqrt{nh}(\hat{\sigma}_{n}^{2}(x) - \sigma^{2}(x)) \\& = &
\frac{\sqrt{\frac{n}{h}}\sum_{i=1}^{n}K(\frac{X_{t_{i-1}}-x}{h})\{\frac{\delta
S_{n,2}}{h^2}-(\frac{X_{t_{i-1}}-x}{h})\frac{\delta
S_{n,1}}{h}\}\big((\Delta_{i}Y)^{2} -
\sigma^{2}(x)\delta\big)}{\frac{1}{h}\sum_{i=1}^{n}K(\frac{X_{t_{i-1}}-x}{h})\{\frac{\delta
S_{n,2}}{h^2}-(\frac{X_{t_{i-1}}-x}{h})\frac{\delta
S_{n,1}}{h}\}\delta}\\
& ~ & -
\frac{\sqrt{\frac{n}{h}}\sum_{i=1}^{n}K(\frac{X_{t_{i-1}}-x}{h})\{\frac{\delta
S_{n,2}}{h^2}-(\frac{X_{t_{i-1}}-x}{h})\frac{\delta
S_{n,1}}{h}\}(\Delta_{i}Y)^{2}I_{\{\Delta_{i}N \neq
0\}}}{\frac{1}{h}\sum_{i=1}^{n}K(\frac{X_{t_{i-1}}-x}{h})\{\frac{\delta
S_{n,2}}{h^2}-(\frac{X_{t_{i-1}}-x}{h})\frac{\delta
S_{n,1}}{h}\}\delta}
\end{eqnarray*}
Similar to the proof of Lemma \ref{l3},  last term is
$$O_{a.s.}\Big(N_{1}^{2}\sqrt{\frac{\delta}{h}}ln(\frac{1}{\delta})\frac{\delta}{h}\frac{\sigma^{4}(x)}{L_{X}^{2}(T,x)[K_{1}^{2}
- (K_{1}^{1})^{2}]}\Big) \stackrel{a.s.}{\longrightarrow} 0.$$

\noindent By Jacod's stable convergence theorem with the help of
lemmas 2.1 and 4.3, we first show that the numerator of
\begin{equation}
\label{4.4}
\frac{\sqrt{\frac{n}{h}}\sum_{i=1}^{n}K(\frac{X_{t_{i-1}}-x}{h})\{\frac{\delta
S_{n,2}}{h^2}-(\frac{X_{t_{i-1}}-x}{h})\frac{\delta
S_{n,1}}{h}\}\big((\Delta_{i}Y)^{2} -
\sigma^{2}(x)\delta\big)}{\frac{1}{h}\sum_{i=1}^{n}K(\frac{X_{t_{i-1}}-x}{h})\{\frac{\delta
S_{n,2}}{h^2}-(\frac{X_{t_{i-1}}-x}{h})\frac{\delta
S_{n,1}}{h}\}\delta}\end{equation} converges stably in law to
$M_{1}$ with the asymptotic bias
$\frac{1}{2}(\sigma^{2})^{''}(x)[(K_{1}^{2})^{2} -
K_{1}^{1}K_{1}^{3}] \cdot \Big(\frac{L_{X}(T,
x)}{\sigma^{2}(x)}\Big)^{2} \cdot h^{2}$, where $M_{1}$ is a
Gaussian martingale defined on an extension $\big(\tilde{\Omega} ,
\tilde{P} , \tilde{\mathscr{F}}\big)$ of our filtered probability
space and having $\tilde{E}[M_{1}^{2}|\mathscr{F}] =
\frac{2}{\sigma^{2}(x)}\cdot L_{X}^{3}(T,x)\cdot V_{x}^{'}$ with
$V_{x}^{'} = K_{2}^{0}\cdot\big(K_{1}^{2}\big)^{2} +
K_{2}^{2}\cdot\big(K_{1}^{1}\big)^{2} - 2K_{2}^{1}\cdot
K_{1}^{2}\cdot K_{1}^{1}.$

\noindent Using the It\^{o} formula on $(\Delta_{i}Y)^{2}$, we have
\begin{eqnarray*}
& ~ &
\sqrt{\frac{n}{h}}\sum_{i=1}^{n}K(\frac{X_{t_{i-1}}-x}{h})\{\frac{\delta
S_{n,2}}{h^2}-(\frac{X_{t_{i-1}}-x}{h})\frac{\delta
S_{n,1}}{h}\}\big((\Delta_{i}Y)^{2} - \sigma^{2}(x)\delta\big)\\ & =
&
\sqrt{\frac{n}{h}}\sum_{i=1}^{n}K(\frac{X_{t_{i-1}}-x}{h})\{\frac{\delta
S_{n,2}}{h^2}-(\frac{X_{t_{i-1}}-x}{h})\frac{\delta
S_{n,1}}{h}\}\Big[2\int_{t_{i-1}}^{t_{i}}{(Y_{s} -
Y_{(i-1)\delta})\mu_{s}}ds \\
& & {} + 2\int_{t_{i-1}}^{t_{i}}{(Y_{s} -
Y_{(i-1)\delta})\sigma_{s}}dW_{s}\Big] +
\sqrt{\frac{n}{h}}\sum_{i=1}^{n}K(\frac{X_{t_{i-1}}-x}{h})\{\frac{\delta
S_{n,2}}{h^2}\\ & &-(\frac{X_{t_{i-1}}-x}{h})\frac{\delta
S_{n,1}}{h}\}\int_{t_{i-1}}^{t_{i}}{(\sigma^{2}_{s} -
\sigma^{2}(x))}ds\\
& =: & \sum_{i=1}^{n}q_{i} +
\sqrt{\frac{n}{h}}\sum_{i=1}^{n}K(\frac{X_{t_{i-1}}-x}{h})\{\frac{\delta
S_{n,2}}{h^2}-(\frac{X_{t_{i-1}}-x}{h})\frac{\delta
S_{n,1}}{h}\}\int_{t_{i-1}}^{t_{i}}{(\sigma^{2}_{s} -
\sigma^{2}(x))}ds.
\end{eqnarray*}

\noindent {\textbf{First Step:} the stable convergence in law for
the numerator of the estimator.

\noindent For the term $\sum_{i=1}^{n}q_{i}.$ Divide $q_{i}$ by
$L_{X}(T, x)$ as $q_{i}^{'}.$ For simplicity in the detailed proof,
denote
\begin{eqnarray*}
K^{\divideontimes}_{i-1} & = &
\frac{K(\frac{X_{t_{i-1}}-x}{h})\{\frac{\delta
S_{n,2}}{h^2}-(\frac{X_{t_{i-1}}-x}{h})\frac{\delta
S_{n,1}}{h}\}}{L_{X}(T, x)}\\
& \stackrel{\mathrm{a.s.}}{\longrightarrow} &
\frac{K(\frac{X_{t_{i-1}}-x}{h}) \cdot \frac{K_{1}^{2}L_{X}(T,
x)}{\sigma^{2}(x)} -
K(\frac{X_{t_{i-1}}-x}{h})(\frac{X_{t_{i-1}}-x}{h}) \cdot
\frac{K_{1}^{1}L_{X}(T, x)}{\sigma^{2}(x)}}{L_{X}(T, x)}\\
& = & K(\frac{X_{t_{i-1}}-x}{h}) \cdot
\frac{K_{1}^{2}}{\sigma^{2}(x)} -
K(\frac{X_{t_{i-1}}-x}{h})(\frac{X_{t_{i-1}}-x}{h}) \cdot
\frac{K_{1}^{1}}{\sigma^{2}(x)} := K^{\star}_{i-1}.
\end{eqnarray*}
 So there exists an integer $m$ such that
$K^{\divideontimes}_{i-1} = K^{\star}_{i-1} + o_{\mathrm{a.s.}}(1)$
when $n>m$. In the following proof, we will substitute
$K(\frac{X_{t_{i-1}}-x}{h}) \cdot \frac{K_{1}^{2}}{\sigma^{2}(x)} -
K(\frac{X_{t_{i-1}}-x}{h})(\frac{X_{t_{i-1}}-x}{h}) \cdot
\frac{K_{1}^{1}}{\sigma^{2}(x)}$ for $K^{\star}_{i-1}$ for the
sample sizes $n>m$ and assume the sample sizes $n>m$.

In fact, \begin{eqnarray*} q_{i}^{'} & = &
\frac{\sqrt{\frac{n}{h}}K(\frac{X_{t_{i-1}}-x}{h})\{\frac{\delta
S_{n,2}}{h^2}-(\frac{X_{t_{i-1}}-x}{h})\frac{\delta
S_{n,1}}{h}\}}{L_{X}(T, x)} \times\\
& ~ & \times \Big[2\int_{t_{i-1}}^{t_{i}}{(Y_{s} -
Y_{(i-1)\delta})\mu_{s}}ds + 2\int_{t_{i-1}}^{t_{i}}{(Y_{s} -
Y_{(i-1)\delta})\sigma_{s}}dW_{s}\Big]\\
& \stackrel{\mathrm{a.s.}}{\longrightarrow} &
\sqrt{\frac{n}{h}}K(\frac{X_{t_{i-1}}-x}{h})\Big\{
\frac{K_{1}^{2}}{\sigma^{2}(x)} - \frac{X_{t_{i-1}}-x}{h} \cdot
\frac{K_{1}^{1}}{\sigma^{2}(x)}\Big\}\Big[2\int_{t_{i-1}}^{t_{i}}{(Y_{s}
- Y_{(i-1)\delta})\mu_{s}}ds \\ & &+ 2\int_{t_{i-1}}^{t_{i}}{(Y_{s}
-
Y_{(i-1)\delta})\sigma_{s}}dW_{s}\Big]\\
& = &
\sqrt{\frac{n}{h}}K^{\star}_{i-1}\Big[2\int_{t_{i-1}}^{t_{i}}{(Y_{s}
- Y_{(i-1)\delta})\mu_{s}}ds + 2\int_{t_{i-1}}^{t_{i}}{(Y_{s} -
Y_{(i-1)\delta})\sigma_{s}}dW_{s}\Big],
\end{eqnarray*}
where $K^{\star}_{i-1}$ is measurable with respect to the
$\sigma$-algebra generated by $\{X_u ,0 \leq u \leq t_{i-1} \}.$

Jacod's stable convergence theorem tell us  that the following arguments,
\begin{align*}
& S_{1} = \sum_{i=1}^{n}E_{i-1}[q_{i}^{'}]
\stackrel{P}{\rightarrow}0,\\
& S_{2} = \sum_{i=1}^{n}\big(E_{i-1}[{q_{i}^{'}}^{2}] -
E_{i-1}^{2}[q_{i}^{'}]\big)\stackrel{P}{\rightarrow}\frac{2}{\sigma^{2}(x)}\cdot
L_{X}(T,x)\big[K_{2}^{0}\cdot\big(K_{1}^{2}\big)^{2} +
K_{2}^{2}\cdot\big(K_{1}^{1}\big)^{2} - 2K_{2}^{1}\cdot
K_{1}^{2}\cdot K_{1}^{1}\big],\\
& S_{3} = \sum_{i=1}^{n}E_{i-1}[{q_{i}^{'}}^{4}]
\stackrel{P}{\rightarrow}0,\\
& S_{4} = \sum_{i=1}^{n}E_{i-1}[q_{i}^{'}\Delta_{i}H]
\stackrel{P}{\rightarrow}0,
\end{align*}
implies
 $\sum_{i=1}^{n}q_{i}^{'}
\stackrel{st}{\rightarrow}M_{1}$, where either $H = W$ or $H$ is any
bounded martingale orthogonal (in the martingale sense) to $W$ and
$E_{i-1}[~\cdot~] = E[~\cdot~| X_{t_{i-1}}].$ Remark that $\mu$ is
assumed to be c\`{a}dl\`{a}g, therefore we know that it is locally
bounded on $[0 , T]$. By localizing, we can assume that $\mu$ is
a.s. bounded on $[0 , T].$

\noindent {\textbf{For $S_{1}$},
\begin{eqnarray*}
\big|S_{1}\big| & = &
\Big|\sqrt{\frac{n}{h}}\sum_{i=1}^{n}E_{i-1}\Big\{K^{\star}_{i-1}\Big[2\int_{t_{i-1}}^{t_{i}}{(Y_{s}
- Y_{(i-1)\delta})\mu_{s}}ds + 2\int_{t_{i-1}}^{t_{i}}{(Y_{s} -
Y_{(i-1)\delta})\sigma_{s}}dW_{s}\Big]\Big\}\Big|\\
& = &
\Big|\sqrt{\frac{n}{h}}\sum_{i=1}^{n}K^{\star}_{i-1}E_{i-1}\Big[2\int_{t_{i-1}}^{t_{i}}{(Y_{s}
- Y_{(i-1)\delta})\mu_{s}}ds + 2\int_{t_{i-1}}^{t_{i}}{(Y_{s} -
Y_{(i-1)\delta})\sigma_{s}}dW_{s}\Big]\Big|\\
& = &
\Big|\sqrt{\frac{n}{h}}\sum_{i=1}^{n}K^{\star}_{i-1}E_{i-1}\Big[2\int_{t_{i-1}}^{t_{i}}{(Y_{s}
- Y_{(i-1)\delta})\mu_{s}}ds\Big]\Big|\\
& \leq &
2\sqrt{\frac{n}{h}}\sum_{i=1}^{n}\big|K^{\star}_{i-1}\big|E_{i-1}\Big[\int_{t_{i-1}}^{t_{i}}{\big|(Y_{s}
- Y_{(i-1)\delta})\mu_{s}}\big|ds\Big]\\
& \leq & 2
\sqrt{\frac{n}{h}}\sum_{i=1}^{n}\big|K^{\star}_{i-1}\big|\cdot
\max_{1\leq i \leq n}E_{i-1}\Big[C\Big(\delta
ln\big(\frac{1}{\delta}\big)\Big)^{\frac{1}{2}}
\int_{t_{i-1}}^{t_{i}}{|\mu_{s}|ds}\Big]\\
& \leq &
C\cdot\Big[\frac{1}{h}\sum_{i=1}^{n}K(\frac{X_{t_{i-1}}-x}{h})\delta
\cdot \frac{K_{1}^{2}}{\sigma^{2}(x)} \\ & &+ sign(K_{1}^{1}) \cdot
\frac{1}{h}\sum_{i=1}^{n}K(\frac{X_{t_{i-1}}-x}{h})(\frac{X_{t_{i-1}}-x}{h})\delta
\cdot
\frac{K_{1}^{1}}{\sigma^{2}(x)}\Big]\cdot\big(\sqrt{h}(ln\frac{1}{\delta})^\frac{1}{2}\big)\\
& = & C \cdot \Big[\frac{L_{X}(T, x)}{\sigma^{2}(x)} \cdot
\frac{K_{1}^{2}}{\sigma^{2}(x)} + sign(K_{1}^{1}) \cdot
\frac{L_{X}(T, x) K_{1}^{2}}{\sigma^{2}(x)} \cdot
\frac{K_{1}^{1}}{\sigma^{2}(x)}\Big] \cdot \big(\sqrt{h}(ln\frac{1}{\delta})^\frac{1}{2}\big)\\
 & = & C\big(\sqrt{h}(ln\frac{1}{\delta})^\frac{1}{2}\big)
\rightarrow 0.
\end{eqnarray*}
by the measurability $K^{\star}_{i-1}$ with respect to $\sigma\{X_u
,0 \leq u \leq t_{i-1} \}$ in the second equation, the martingale
property of stochastic integral in the third equation, the UBI
property in the second inequation, the expression of
$K^{\star}_{i-1}$, the assumption 3.3 and the boundness of $\mu$ in
the third inequation.

\noindent {\textbf{For $S_{2}$},
\begin{eqnarray*}
S_{2} & = & \sum_{i=1}^{n}\big(E_{i-1}[{q_{i}^{'}}^{2}] -
E_{i-1}^{2}[q_{i}^{'}]\big)\\
& = &
4\frac{n}{h}\sum_{i=1}^{n}{K_{i-1}^{\star}}^{2}\big[E_{i-1}\big[\big(\int_{t_{i-1}}^{t_{i}}{(Y_{s}
- Y_{(i-1)\delta})}\mu_{s}ds\big)^{2}\big]\\
& + & 2E_{i-1}\big[\int_{t_{i-1}}^{t_{i}}{(Y_{s} -
Y_{(i-1)\delta})}\mu_{s}ds\int_{t_{i-1}}^{t_{i}}{(Y_{s} -
Y_{(i-1)\delta})}\sigma_{s}dW_{s}\big]\\
& + & E_{i-1}\big[\int_{t_{i-1}}^{t_{i}}{(Y_{s} -
Y_{(i-1)\delta})^{2}}\sigma_{s}^{2}ds\big] -
E^{2}_{i-1}\big[\int_{t_{i-1}}^{t_{i}}{(Y_{s} -
Y_{(i-1)\delta})}\mu_{s}ds\ \big]\big]\\
& =: & S_{2,1} + S_{2,2} + S_{2,3} + S_{2,4}.
\end{eqnarray*}
By H\"{o}lder and
Burkholder-Davis-Gundy  inequality,   $S_{2,3}$  is larger than the others (which has the lowest
infinitesimal order). Here we only deal with the dominant one,
others are neglected. For $S_{2,3}$, it consists of three terms by
an expansion of ${(Y_{s} - Y_{(i-1)\delta})^{2}}$, of which we only
need to consider the lowest infinitesimal order one. Due to
H\"{o}lder and Burkholder-Davis-Gundy inequality again, it is
sufficient to prove the convergence in probability of
\begin{eqnarray*}
& ~ &
4\frac{n}{h}\sum_{i=1}^{n}{K_{i-1}^{\star}}^{2}\int_{t_{i-1}}^{t_{i}}
{E_{i-1}\big[\big(\int_{t_{i-1}}^{s}\sigma_{u}dW_{u}\big)^{2}\sigma_{s}^{2}\big]ds}\\
& \stackrel{a.s.}{\longrightarrow} & \frac{2}{\sigma^{2}(x)}\cdot
L_{X}(T,x)\big[K_{2}^{0}\cdot\big(K_{1}^{2}\big)^{2} +
K_{2}^{2}\cdot\big(K_{1}^{1}\big)^{2} - 2K_{2}^{1}\cdot
K_{1}^{2}\cdot K_{1}^{1}\big].
\end{eqnarray*}

\noindent To show it, we can prove the following five arguments:

\noindent
{\textbf{(D1)}~$\frac{n}{h}\sum_{i=1}^{n}{K_{i-1}^{\star}}^{2}\int_{t_{i-1}}^{t_{i}}
{E_{i-1}\big[\big(\int_{(i-1)\delta}^{s}\sigma_{u}dW_{u}\big)^{2}(\sigma_{s}^{2}
- \sigma_{i-1}^{2})}\big]ds \stackrel{a.s.}{\longrightarrow} 0;$
\medskip

\noindent
{\textbf{(D2)}~$\frac{1}{h}\sum_{i=1}^{n}{K_{i-1}^{\star}}^{2}\int_{t_{i-1}}^{t_{i}}
{\big(nE_{i-1}\big[\int_{(i-1)\delta}^{s}\sigma_{u}^{2}du\big]\sigma_{i-1}^{2}
- \frac{\sigma_{i-1}^{4}}{2}}\big)ds
\stackrel{a.s.}{\longrightarrow} 0;$
\medskip

\noindent
{\textbf{(D3)}~$\frac{1}{h}\sum_{i=1}^{n}{K_{i-1}^{\star}}^{2}\int_{t_{i-1}}^{t_{i}}
(\sigma_{i-1}^{4} - \sigma_{s}^{4}) \stackrel{a.s.}{\longrightarrow}
0;$
\medskip

\noindent
{\textbf{(D4)}~$\frac{1}{h}\sum_{i=1}^{n}\int_{t_{i-1}}^{t_{i}}
\big({K_{i-1}^{\star}}^{2}\frac{\sigma_{s}^{4}}{2} -
{K_{s}^{\star}}^{2}\frac{\sigma_{s}^{4}}{2}\big)ds
\stackrel{a.s.}{\longrightarrow} 0;$
\medskip

\noindent
{\textbf{(D5)}~$\frac{4}{h}\sum_{i=1}^{n}\int_{t_{i-1}}^{t_{i}}
\big({K_{s}^{\star}}^{2}\frac{\sigma_{s}^{4}}{2}\big)ds
\stackrel{a.s.}{\longrightarrow} \frac{2}{\sigma^{2}(x)}\cdot
L_{X}(T,x)\big[K_{2}^{0}\cdot\big(K_{1}^{2}\big)^{2} +
K_{2}^{2}\cdot\big(K_{1}^{1}\big)^{2} - 2K_{2}^{1}\cdot
K_{1}^{2}\cdot K_{1}^{1}\big].$
\medskip

\noindent $\bf{For~~D1:}$

\noindent Applying the mean-value theorem for $\sigma^{2}$,
neglecting the terms with $i \in I_{1,n}$ similarly as that in
Proposition 3.1 and bounding $|X_{s} - X_{(i-1)\delta}|$ by the UBI
property when $i \in I_{0,n}$, for the sum in (D1) we can reach
\begin{eqnarray*}
& ~ &
\frac{n}{h}\sum_{i=1}^{n}{K_{i-1}^{\star}}^{2}\int_{t_{i-1}}^{t_{i}}
{E_{i-1}\big[\big(\int_{(i-1)\delta}^{s}\sigma_{u}dW_{u}\big)^{2}}\big]ds
* \sup_{x}|(\sigma^{2})^{'}(x)| * \sqrt{\delta
ln\frac{1}{\delta}}\\
& = &
\frac{n}{h}\sum_{i=1}^{n}{K_{i-1}^{\star}}^{2}\int_{t_{i-1}}^{t_{i}}
{E_{i-1}\big[\int_{(i-1)\delta}^{s}\sigma_{u}^{2}d{u}}\big]ds
* \sup_{x}|(\sigma^{2})^{'}(x)| * \sqrt{\delta
ln\frac{1}{\delta}}\\
& = &
\frac{n}{h}\sum_{i=1}^{n}{K_{i-1}^{\star}}^{2}\int_{t_{i-1}}^{t_{i}}\big({t
- (i-1)\delta}\big)ds
* \sup_{x}|(\sigma^{2})^{'}(x)|^{2} * \sqrt{\delta
ln\frac{1}{\delta}}\\
 & = & O_{a.s.}\big(\frac{1}{h}\sum_{i
\in I_{0,n}}{K_{i-1}^{\star}}^{2}\delta\cdot\sqrt{\delta
ln\frac{1}{\delta}}\big)\\ & = &
O_{a.s.}\big(\frac{1}{h}\sum_{i=1}^{n}{K_{i-1}^{\star}}^{2}\delta
\cdot \sqrt{\delta
ln\frac{1}{\delta}}\big)\\
& = & O_{a.s.}\Big(\frac{K_{2}^{0}\cdot\big(K_{1}^{2}\big)^{2} +
K_{2}^{2}\cdot\big(K_{1}^{1}\big)^{2} - 2K_{2}^{1}\cdot
K_{1}^{2}\cdot K_{1}^{1}}{\sigma^{6}(x)}L_{X}(T, x)\Big) \cdot
\sqrt{\delta ln\frac{1}{\delta}} \stackrel{a.s.}{\longrightarrow} 0.
\end{eqnarray*}

\noindent $\bf{For~~D2:}$

\noindent By the Taylor expansion,
$$\int_{(i-1)\delta}^{s}\sigma_{u}^{2}du = \sigma_{i-1}^{2}(s -
t_{i-1}) + (\sigma^{2})^{'}(\tilde{X}_{is})\frac{(s -
t_{i-1})^{2}}{2}.$$

\noindent For the sum in (D2) we can obtain
\begin{eqnarray*}
& ~ &
\frac{1}{h}\sum_{i=1}^{n}{K_{i-1}^{\star}}^{2}\int_{t_{i-1}}^{t_{i}}
{\sigma_{i-1}^{2}\big[nE_{i-1}\big[\int_{(i-1)\delta}^{s}\sigma_{u}^{2}du\big]
- \frac{\sigma_{i-1}^{2}}{2}}\big]ds\\
& = &
\frac{1}{h}\sum_{i=1}^{n}{K_{i-1}^{\star}}^{2}\int_{t_{i-1}}^{t_{i}}
{\sigma_{i-1}^{2}\big(n\sigma_{i-1}^{2}(s
- t_{i-1}) +
nE_{i-1}[(\sigma^{2})^{'}(\tilde{X}_{is})]\times\frac{(s -
t_{i-1})^{2}}{2} - \frac{\sigma_{i-1}^{2}}{2}\big)}ds\\
& = &
\frac{1}{h}\sum_{i=1}^{n}{K_{i-1}^{\star}}^{2}\int_{t_{i-1}}^{t_{i}}
nE_{i-1}[(\sigma^{2})^{'}(\tilde{X}_{is})]\times\frac{(s
- t_{i-1})^{2}}{2}ds\\
& = &
\frac{1}{h}\sum_{i=1}^{n}{K_{i-1}^{\star}}^{2}\int_{t_{i-1}}^{t_{i}}n\times\frac{(s
- t_{i-1})^{2}}{2}ds * \sup_{x}|(\sigma^{2})^{'}(x)| \\
& = &
O_{a.s.}\Big(\delta\frac{1}{h}\sum_{i=1}^{n}{K_{i-1}^{\star}}^{2}\delta\Big)\\
& = & O_{a.s.}\Big(\frac{K_{2}^{0}\cdot\big(K_{1}^{2}\big)^{2} +
K_{2}^{2}\cdot\big(K_{1}^{1}\big)^{2} - 2K_{2}^{1}\cdot
K_{1}^{2}\cdot K_{1}^{1}}{\sigma^{6}(x)}L_{X}(T, x)\Big) \cdot
\delta \longrightarrow 0.
\end{eqnarray*}

\noindent $\bf{For~~D3:}$

 Neglecting the terms with $i \in I_{1,n}$ proceeding as
Lemma \ref{l3}, (D3) is a.s. dominated by
\begin{eqnarray*}
& ~ & \overline{(\sigma_{s}^{4})^{'}}\sqrt{\delta
ln\frac{1}{\delta}}\frac{1}{h}\sum_{i=1}^{n}{K_{i-1}^{\star}}^{2}\delta\\
& = & \overline{(\sigma_{s}^{4})^{'}}\sqrt{\delta
ln\frac{1}{\delta}} \cdot
\Big(\frac{K_{2}^{0}\cdot\big(K_{1}^{2}\big)^{2} +
K_{2}^{2}\cdot\big(K_{1}^{1}\big)^{2} - 2K_{2}^{1}\cdot
K_{1}^{2}\cdot K_{1}^{1}}{\sigma^{6}(x)}L_{X}(T, x)\Big)\\
& \stackrel{a.s.}{\longrightarrow} & 0,
\end{eqnarray*}

\noindent $\bf{For~~D4:}$

\noindent Since  $\sigma_{s}^{4}$ is bounded  almost surely, similar
to the proof of  Lemma \ref{l3} and $\bf{D1}$,  $\bf{D4}$ is obtained.

\noindent $\bf{For~~D5:}$

\noindent Using the occupation time formula, we obtain
\begin{eqnarray*}
& &\frac{2}{h}\sum_{i=1}^{n}\int_{t_{i-1}}^{t_{i}}\big({K_{s}^{\star}}^{2}
\cdot \sigma_{s}^{4}\big)ds\\ & = &
\frac{2}{h}\sum_{i=1}^{n}\int_{t_{i-1}}^{t_{i}}\Big(K(\frac{X_{s}-x}{h})
\cdot \frac{K_{1}^{2}}{\sigma^{2}(x)} -
K(\frac{X_{s}-x}{h})(\frac{X_{s}-x}{h}) \cdot
\frac{K_{1}^{1}}{\sigma^{2}(x)}\Big)^{2} \cdot \sigma^{4}(X_{s})ds\\
& = & \frac{2}{h}\int_{0}^{1}K^{2}(\frac{X_{s}-x}{h})
\cdot \sigma^{4}(X_{s})ds \cdot \frac{(K_{1}^{2})^{2}}{\sigma^{4}(x)}\\
& ~ & - \frac{4}{h}\int_{0}^{1}K^{2}(\frac{X_{s} -
x}{h})(\frac{X_{s} - x}{h}) \cdot \sigma^{4}(X_{s})ds \cdot
\frac{K_{1}^{2}K_{1}^{1}}{\sigma^{4}(x)}\\
& ~ & + \frac{2}{h}\int_{0}^{1}K^{2}(\frac{X_{s} -
x}{h})(\frac{X_{s} - x}{h})^{2} \cdot \sigma^{4}(X_{s})ds \cdot
\frac{(K_{1}^{1})^{2}}{\sigma^{4}(x)}\\
& = & \frac{2}{h}\int_{\mathbb{R}}K^{2}(\frac{a-x}{h})
\cdot \sigma^{4}(a) \frac{L(a)}{\sigma^{2}(a)}ds \cdot \frac{(K_{1}^{2})^{2}}{\sigma^{4}(x)}\\
& ~ & - \frac{4}{h}\int_{\mathbb{R}}K^{2}(\frac{a - x}{h})(\frac{a -
x}{h}) \cdot \sigma^{4}(a) \frac{L(a)}{\sigma^{2}(a)}ds \cdot
\frac{K_{1}^{2}K_{1}^{1}}{\sigma^{4}(x)}\\
& ~ & + \frac{2}{h}\int_{\mathbb{R}}K^{2}(\frac{a - x}{h})(\frac{a -
x}{h})^{2} \cdot \sigma^{4}(a) \frac{L(a)}{\sigma^{2}(a)}ds \cdot
\frac{(K_{1}^{1})^{2}}{\sigma^{4}(x)}\\
& \stackrel{a.s.}{\longrightarrow} & \frac{2}{\sigma^{2}(x)}\cdot
L_{X}(T,x)\big[K_{2}^{0}\cdot\big(K_{1}^{2}\big)^{2} +
K_{2}^{2}\cdot\big(K_{1}^{1}\big)^{2} - 2K_{2}^{1}\cdot
K_{1}^{2}\cdot K_{1}^{1}\big].
\end{eqnarray*}

\noindent {\textbf{For $S_{3}$}, let us come back to the proof of
{\textbf{$S_{3}$}. Using BDG and H\"{o}lder inequalities, we have
\begin{eqnarray*}
& ~ &
\frac{n^{2}}{h^{2}}\sum_{i=1}^{n}{K_{i-1}^{\star}}^{4}E_{i-1}\big[\big(\int_{t_{i-1}}^{t_{i}}{(Y_{s}
- Y_{(i-1)\delta})}\sigma_{s}dW_{s}\big)^{4}\big]\\
& \leq &
\frac{n^{2}}{h^{2}}\sum_{i=1}^{n}{K_{i-1}^{\star}}^{4}E_{i-1}\big[\sup_{s
\in [t_{i-1}, t_{i}]}\big(\int_{t_{i-1}}^{s}{(Y_{s} -
Y_{(i-1)\delta})}\sigma_{s}dW_{s}\big)^{4}\big]\\
& \leq & C \cdot
\frac{n^{2}}{h^{2}}\sum_{i=1}^{n}{K_{i-1}^{\star}}^{4}E_{i-1}\big[\big(\int_{t_{i-1}}^{t_{i}}{(Y_{s}
- Y_{(i-1)\delta})^{2}}\sigma_{s}^{2}d{s}\big)^{2}\big]\\
& \leq & C \cdot
\frac{n^{2}}{h^{2}}\sum_{i=1}^{n}{K_{i-1}^{\star}}^{4}E_{i-1}\big[\int_{t_{i-1}}^{t_{i}}{(Y_{s}
- Y_{(i-1)\delta})^{4}}\sigma_{s}^{4}d{s}\big] \cdot
E_{i-1}\big[\int_{t_{i-1}}^{t_{i}}1^{2}ds\big]\\
& = & C \cdot
\frac{n}{h^{2}}\sum_{i=1}^{n}{K_{i-1}^{\star}}^{4}E_{i-1}\big[\int_{t_{i-1}}^{t_{i}}{(Y_{s}
- Y_{(i-1)\delta})^{4}}\sigma_{s}^{4}ds\big]\\
& \leq & C \cdot
\frac{n}{h^{2}}\sum_{i=1}^{n}{K_{i-1}^{\star}}^{4}E_{i-1}\big[\int_{t_{i-1}}^{t_{i}}
\big(\int^{s}_{(i-1)\delta}\sigma_{u}dW_{u}\big)^{4}ds\big]\\
& \leq & C \cdot
\frac{n}{h^{2}}\sum_{i=1}^{n}{K_{i-1}^{\star}}^{4}\Big[\int_{t_{i-1}}^{t_{i}}
E_{i-1}\big(\int^{s}_{(i-1)\delta}\sigma^{4}_{u}d{u}\big)ds
\cdot E_{i-1}\big[\int_{t_{i-1}}^{s}1^{2}ds\big]\Big]\\
& \leq & \frac{1}{h^{2}}\sum_{i=1}^{n}{K_{i-1}^{\star}}^{4} \cdot
\delta^{2}\\
& = & \frac{\delta}{h} \cdot \frac{1}{h}
\sum_{i=1}^{n}{K_{i-1}^{\star}}^{4} \cdot \delta =
O_{a.s.}\big(\frac{\delta}{h}\big) \longrightarrow 0.
\end{eqnarray*}

\noindent {\textbf{For $S_{4}$},

Set  $\Delta_{i}Z := \int_{t_{i-1}}^{t_{i}}(Y_{s} -
Y_{i-1})dY_{s}.$ If $H = W,$ then
\begin{eqnarray*}
\sum_{i=1}^{n}E_{i-1}[q_{i}\Delta_{i}H] & = &
2\sqrt{\frac{n}{h}}\sum_{i=1}^{n}K_{i-1}^{\star}E_{i-1}[\Delta_{i}Z\Delta_{i}H]\\
& \leq &
2\sqrt{\frac{n}{h}}\sum_{i=1}^{n}K_{i-1}^{\star}\sqrt{E_{i-1}[(\Delta_{i}Z)^{2}]}\sqrt{E_{i-1}[(\Delta_{i}W)^{2}]}\\
& = &
O_{a.s.}\big(ln\frac{1}{\delta}\sqrt{h}\cdot\frac{1}{h}\sum_{i=1}^{n}K_{i-1}^{\star}\delta\big)\\
& = & O_{a.s.}\big(ln\frac{1}{\delta}\sqrt{h}\cdot \frac{K_{1}^{2} -
(K_{1}^{1})^{2}}{\sigma^{4}(x)}L_{X}(T, x)\big) \rightarrow 0.
\end{eqnarray*}
by using the H\"{o}lder inequality.

If H is orthogonal to W, then
\begin{eqnarray*}
\sum_{i=1}^{n}E_{i-1}[q_{i}\Delta_{i}H] & = &
\sqrt{\frac{n}{h}}\sum_{i=1}^{n}K_{i-1}^{\star}E_{i-1}[\Delta_{i}Z\Delta_{i}H]\\
& = &
\sqrt{\frac{n}{h}}\sum_{i=1}^{n}K_{i-1}^{\star}E_{i-1}\big[\int_{t_{i-1}}^{t_{i}}{(Y_{s}
- Y_{i-1})\mu_{s}ds} \Delta_{i}H\big]\\
& = &
O_{a.s.}\big(ln\frac{1}{\delta}\sqrt{h}\cdot\frac{1}{h}\sum_{i=1}^{n}K_{i-1}^{\star}\delta\big)\\
& = & O_{a.s.}\big(ln\frac{1}{\delta}\sqrt{h}\cdot \frac{K_{1}^{2} -
(K_{1}^{1})^{2}}{\sigma^{4}(x)}L_{X}(T, x)\big) \rightarrow 0,
\end{eqnarray*}
provided the boundness of H such that $\Delta_{i}H \leq C.$

\noindent {\textbf{Second Step:} the asymptotic bias for the
numerator of the estimator.

\noindent We now prove the following three results for
\begin{equation}
\label{bias}
\sqrt{\frac{n}{h}}\sum_{i=1}^{n}K(\frac{X_{t_{i-1}}-x}{h})\{\frac{\delta
S_{n,2}}{h^2}-(\frac{X_{t_{i-1}}-x}{h})\frac{\delta
S_{n,1}}{h}\}\int_{t_{i-1}}^{t_{i}}{(\sigma^{2}_{s} -
\sigma^{2}(x))}ds,
\end{equation}
 that is,
\begin{align*}
& A_{1_{n,T}} := \sqrt{\frac{n}{h}}
\sum_{i=1}^{n}K(\frac{X_{t_{i-1}}-x}{h})\int_{t_{i-1}}^{t_{i}}{(\sigma^{2}_{s}
- \sigma^{2}_{i-1})}~ds ~\frac{\delta S_{n,2}}{h^2} =
o_{a.s.}(A_{2_{n,T}}),\\
& B_{1_{n,T}} := \sqrt{\frac{n}{h}}
\sum_{i=1}^{n}K(\frac{X_{t_{i-1}}-x}{h})(\frac{X_{t_{i-1}}-x}{h})\int_{t_{i-1}}^{t_{i}}{(\sigma^{2}_{s}
- \sigma^{2}_{i-1})}~ds ~\frac{\delta S_{n,1}}{h} =
o_{a.s.}(B_{2_{n,T}}),\\
& C_{n,T} := \frac{1}{\sqrt{nh}}\Big(A_{2_{n,T}} + B_{2_{n,T}}\Big)
\stackrel{\mathrm{a.s.}}{\longrightarrow}
\frac{1}{2}(\sigma^{2})^{''}(x)[(K_{1}^{2})^{2} -
K_{1}^{1}K_{1}^{3}] \cdot h^{2},
\end{align*}
where
\begin{align*}
& A_{2_{n,T}} := \sqrt{\frac{n}{h}}
\sum_{i=1}^{n}K(\frac{X_{t_{i-1}}-x}{h})\int_{t_{i-1}}^{t_{i}}{(\sigma^{2}_{i-1}
- \sigma^{2}(x))}~ds ~\frac{\delta S_{n,2}}{h^2},\\
& B_{2_{n,T}} := \sqrt{\frac{n}{h}}
\sum_{i=1}^{n}K(\frac{X_{t_{i-1}}-x}{h})(\frac{X_{t_{i-1}}-x}{h})\int_{t_{i-1}}^{t_{i}}{(\sigma^{2}_{i-1}
- \sigma^{2}(x))}~ds ~\frac{\delta S_{n,1}}{h}.
\end{align*}
\noindent Firstly,
$$\frac{A_{1_{n,T}}}{A_{2_{n,T}}} = \frac{\frac{1}{h}\sum_{i=1}^{n}
K(\frac{X_{t_{i-1}}-x}{h})\int_{t_{i-1}}^{t_{i}}{(\sigma^{2}_{s} -
\sigma^{2}_{i-1})}~ds}{\frac{1}{h}\sum_{i=1}^{n}K(\frac{X_{t_{i-1}}-x}{h})\int_{t_{i-1}}^{t_{i}}{(\sigma^{2}_{i-1}
- \sigma^{2}(x))}~ds}. $$ \noindent By the Taylor expansion for
$\sigma^{2}_{i-1} - \sigma^{2}(x)$ in $A_{2_{n,T}}$ up to order 2,
$$\sigma^{2}_{i-1}
- \sigma^{2}(x) = (\sigma^{2})^{'}(x)(X_{(i-1)\delta} - x) +
\frac{1}{2}(\sigma^{2})^{''}(x + \theta(X_{(i-1)\delta} -
x))(X_{(i-1)\delta} - x)^{2}, $$ \noindent where $\theta$
is a random variable satisfying $\theta \in [0, 1].$

\noindent For $A_{2_{n,T}}$ , by Lemma \ref{l3},
\begin{eqnarray*}
& ~ &
\frac{1}{h}\sum_{i=1}^{n}K(\frac{X_{t_{i-1}}-x}{h})\int_{t_{i-1}}^{t_{i}}{(\sigma^{2}_{i-1}
- \sigma^{2}(x))}~ds\\ & = &
\frac{1}{h}\sum_{i=1}^{n}K(\frac{X_{t_{i-1}}-x}{h})(X_{(i-1)\delta}
- x)\delta~ds*(\sigma^{2})^{'}(x)\\
& + &
\frac{1}{h}\sum_{i=1}^{n}K(\frac{X_{t_{i-1}}-x}{h})(X_{(i-1)\delta}
- x)^{2}\delta~ds*\frac{1}{2}(\sigma^{2})^{''}(x +
\theta(X_{(i-1)\delta} - x))\\
& \stackrel{\mathrm{a.s.}}{\longrightarrow} &
h\frac{(\sigma^{2})^{'}(x)K^{1}_{1}L_{X}(T,x)}{\sigma^{2}(x)} +
\frac{1}{2}h^{2}\frac{(\sigma^{2})^{''}(x)K^{2}_{1}L_{X}(T,x)}{\sigma^{2}(x)}.
\end{eqnarray*}

\noindent Furthermore, we use the mean-value theorem to $\sigma^{2}_{s}
- \sigma^{2}_{i-1}$ for $A_{1_{n,T}},$ then
\begin{eqnarray*}
A_{1_{n,T}} & = &
\frac{1}{h}\sum_{i=1}^{n}K(\frac{X_{t_{i-1}}-x}{h})\int_{t_{i-1}}^{t_{i}}{(\sigma^{2}_{s}
- \sigma^{2}_{i-1})}~ds\\
& = &
\frac{1}{h}\sum_{i=1}^{n}K(\frac{X_{t_{i-1}}-x}{h})\int_{t_{i-1}}^{t_{i}}{(\sigma^{2})^{'}(\xi_{i})(X_{s}
- X_{i-1})}~ds\\
& \stackrel{\mathrm{a.s.}}{\leq} & (\delta ln\frac{1}{\delta})^{\frac{1}{2}}*\sup_{x}|(\sigma^{2})^{'}(x)|
*\frac{1}{h}\sum_{i \in I_{0,n}}K(\frac{X_{t_{i-1}}-x}{h})\delta + 2CN_{1}\delta~(for~i \in I_{1,n})\\
& \rightarrow & O\Big[(\delta
ln\frac{1}{\delta})^{\frac{1}{2}}\frac{L_{X}(T,x)}{\sigma^{2}(x)}\Big]
= o(h)
\end{eqnarray*}
\noindent by the UBI property of $i \in I_{0,n}$.

\noindent Result about $B_{1_{n,T}}$ can be obtained using
$K(u) \cdot u$ instead of $K(u)$ similarly as $A_{1_{n,T}}.$

\noindent Under a simple calculus,
$$ \sum_{i=1}^{n}K(\frac{X_{t_{i-1}}-x}{h})\{\frac{\delta
S_{n,2}}{h^2}-(\frac{X_{t_{i-1}}-x}{h})\frac{\delta S_{n,1}}{h}\}
\cdot (X_{t_{i-1}}-x) = 0.
$$

\noindent For $C_{n,T}$ using the Taylor expansion for
$\sigma^{2}_{i-1} - \sigma^{2}(x)$ up to order 2, we have
\begin{eqnarray*}
C_{n,T} & = &
\frac{1}{h}\sum_{i=1}^{n}K(\frac{X_{t_{i-1}}-x}{h})\{\frac{\delta
S_{n,2}}{h^2}-(\frac{X_{t_{i-1}}-x}{h})\frac{\delta
S_{n,1}}{h}\}\delta\cdot{(\sigma^{2}_{i-1} - \sigma^{2}(x))}\\
& = &
\frac{1}{h}\sum_{i=1}^{n}K(\frac{X_{t_{i-1}}-x}{h})\{\frac{\delta
S_{n,2}}{h^2}-(\frac{X_{t_{i-1}}-x}{h})\frac{\delta
S_{n,1}}{h}\}\delta\\
& \times & [(\sigma^{2})^{'}(x)(X_{(i-1)\delta} - x) +
\frac{1}{2}(\sigma^{2})^{''}(x + \theta(X_{(i-1)\delta} -
x))(X_{(i-1)\delta} - x)^{2}]\\
& = &
\frac{1}{2}\times\frac{1}{h}\sum_{i=1}^{n}K(\frac{X_{t_{i-1}}-x}{h})\frac{\delta
S_{n,2}}{h^2}(\sigma^{2})^{''}(x + \theta(X_{(i-1)\delta} -
x))(X_{(i-1)\delta} - x)^{2}\\
& - &
\frac{1}{2}\times\frac{1}{h}\sum_{i=1}^{n}K(\frac{X_{t_{i-1}}-x}{h})(\frac{X_{t_{i-1}}-x}{h})\frac{\delta
S_{n,1}}{h}(\sigma^{2})^{''}(x + \theta(X_{(i-1)\delta} -
x))(X_{(i-1)\delta} - x)^{2}\\
& =: & C_{1_{n,T}} - C_{2_{n,T}}
\end{eqnarray*}
Similarly as the proof of Lemma \ref{l3},  we obtain
$$C_{1_{n,T}} \stackrel{\mathrm{a.s.}}{\longrightarrow}
\frac{1}{2}(\sigma^{2})^{''}(x)(K_{1}^{2})^{2} \cdot
\Big(\frac{L_{X}(T, x)}{\sigma^{2}(x)}\Big)^{2} \cdot h^{2}$$
$$C_{2_{n,T}} \stackrel{\mathrm{a.s.}}{\longrightarrow}
\frac{1}{2}(\sigma^{2})^{''}(x) K_{1}^{1}K_{1}^{3} \cdot
\Big(\frac{L_{X}(T, x)}{\sigma^{2}(x)}\Big)^{2} \cdot h^{2},$$
\noindent so we have
$$C_{n,T} \stackrel{\mathrm{a.s.}}{\longrightarrow}
\frac{1}{2}(\sigma^{2})^{''}(x)[(K_{1}^{2})^{2} -
K_{1}^{1}K_{1}^{3}] \cdot \Big(\frac{L_{X}(T,
x)}{\sigma^{2}(x)}\Big)^{2} \cdot h^{2}.$$

We complete the proof for Theorem 1.

\subsection{The proof of Theorem \ref{th2}.}
It proceeds basically along the same idea as the detailed
procedure of  Lemma \ref{l3}, which gives the result for $X_{t}$
with finite activity jumps (FA case). As is shown in the proof of
 Lemma \ref{l3}, it is sufficient to prove
\begin{equation}
\label{4.9}
\frac{1}{h}\Big(\sum_{i=1}^{n}
K\big(\frac{X_{t_{i-1}}-x}{h}\big)\big(\frac{X_{t_{i-1}}-x}{h}\big)^{k}\delta
- \int_{0}^{1}{{K\big(\frac{X_{s-} - x}{h}\big)}\big(\frac{X_{s-} -
x}{h}\big)^{k}}ds \Big) \stackrel{p}{\rightarrow} 0 \end{equation}for
$X_{t}$ with finite and infinite activity jumps (IA case). Hence, we
only need to check that the contribution for
$$\frac{1}{h}\sum_{i=1}^{n}K\big(\frac{X_{t_{i-1}}-x}{h}\big)\big(\frac{X_{t_{i-1}}-x}{h}\big)^{k}\delta$$
given by the IA jumps is negligible in the following part based on
the result of Lemma \ref{l3} for FA case.

According to the assumption of $\alpha < 1,$ which means $J$ has
finite variation, we can obtain
$$\tilde{J}_{2t} = \int_{0}^{t}\int_{|x| \leq 1} x~m(dx, ds) - \int_{0}^{t}\int_{|x| \leq 1} x \nu(dx) ds := J_{2t} + C \delta.$$
\noindent Denote $X_{0,t} = \int_{0}^{t} (\mu_{s} + C)ds +
\int_{0}^{t} \sigma_{s} dW_{s},$ we can split $X_{t} = X_{0,t} +
J_{1, t} + J_{2, t}.$ For $i \in I_{1, n} = \{i \in \{1, 2, \cdot
\cdot \cdot n\}: \Delta_{i}N \neq 0\},$ where $N$ is the counting
process with respect to $J_{1, t},$ we have
\begin{eqnarray*}
& ~ & \frac{1}{h}\sum_{i \in
I_{1,n}}\int_{t_{i-1}}^{t_{i}}{\Big(K\big(\frac{X_{t_{i-1}}-x}{h}\big)\cdot\big(\frac{X_{t_{i-1}}
- x}{h}\big)^{k} - {{K\big(\frac{X_{s-} -
x}{h}\big)}\big(\frac{X_{s-} - x}{h}\big)^{k}}\Big)}ds\\ & \leq &
N_{1} \frac{2C\delta}{h} \stackrel{a.s.} \longrightarrow 0.
\end{eqnarray*}

Therefore, we fix $J_{1}$ and regard $K\big(\frac{X_{s-} -
x}{h}\big)\big(\frac{X_{s-} - x}{h}\big)^{k}$ as a two variable
function $F(a, b) := K\Big(\frac{a + J_{1s} + b - x}{h}\Big)
\Big(\frac{a + J_{1s} + b - x}{h}\Big)^{k}$ evaluated at $a =
X_{0,s}$ and $b = J_{2,s}.$

For a function $F(a, b)$ with two variables, by the Taylor
expansion, we have
\begin{eqnarray*}
F(a, b) - F(a_{0}, b_{0}) & = & F_{a}(\xi, \eta)(a - a_{0}) +
F_{b}(\xi, \eta)(b - b_{0})\\
& = & (a - a_{0})\Big[F_{a}(a_{0}, \eta) + F_{aa}(\tilde{\xi},
\eta)(\xi - a_{0})\Big] + F_{b}(\xi, \eta)(b - b_{0})\\
& = & (a - a_{0})\Big[F_{a}(a_{0}, b_{0}) + F_{ab}(a_{0},
\tilde{\eta})(\eta - b_{0}) + F_{aa}(\tilde{\xi}, b_{0})(\xi -
a_{0}) \\& &+ F_{aab}(\tilde{\xi}, \tilde{\eta})(\xi - a_{0})(\eta -
b_{0})\Big] + F_{b}(\xi, \eta)(b - b_{0}),
\end{eqnarray*}
where $F_{a}$ denotes the first partial derivative of the function
$F.$

Using the expansion equation for
$K\big(\frac{X_{t_{i-1}}-x}{h}\big)\cdot\big(\frac{X_{t_{i-1}} -
x}{h}\big)^{k} - {{K\big(\frac{X_{s-} - x}{h}\big)}\big(\frac{X_{s-}
- x}{h}\big)^{k}}$ around $(a_{0}, b_{0}) = (X_{0s}, J_{2s})$ with
$(a, b) = (X_{0, t_{i-1}} J_{2, t_{i-1}})$ and $F( \cdot ) = K(
\cdot ) ( \cdot )^{k}$, we reach
\begin{eqnarray*}
& ~ & \Big|
K\big(\frac{X_{t_{i-1}}-x}{h}\big)\cdot\big(\frac{X_{t_{i-1}} -
x}{h}\big)^{k} - {{K\big(\frac{X_{s-} - x}{h}\big)}\big(\frac{X_{s-}
- x}{h}\big)^{k}} \Big|\\
& := & |F(a, b) - F(a_{0}, b_{0})|\\
& \leq & \frac{|F_{a}(X_{0s}, J_{2s})|}{h_{n}}\sup_{u \in (t_{i-1},
t_{i}]}|X_{0,t_{i-1}} - X_{0u}|\\ & ~ & + \frac{|F_{ab}(X_{0s},
\tilde{J}_{2s})|}{h_{n}^{2}}\sup_{u \in (t_{i-1},
t_{i}]}|X_{0,t_{i-1}} - X_{0u}| \cdot |J_{2, t_{i-1}} - J_{2s}|\\
& ~ & + \frac{|F_{aa}(\tilde{X}_{0s}, {J}_{2s})|}{h_{n}^{2}} \sup_{u
\in (t_{i-1}, t_{i}]}|X_{0,t_{i-1}} - X_{0u}|^{2} \\ & &+
\frac{|F_{aab}(\tilde{X}_{0s}, \tilde{J}_{2s})|}{h_{n}^{3}}\sup_{u
\in (t_{i-1}, t_{i}]}|X_{0,t_{i-1}} - X_{0u}|^{2} \cdot |J_{2,
t_{i-1}} - J_{2s}| + \frac{|F_{b}(\tilde{X}_{0s}, \tilde{J}_{2s})|}{h_{n}} |J_{2,
t_{i-1}} - J_{2s}|,
\end{eqnarray*}
where $\tilde{X}_{0s}, \tilde{J}_{2s}$ are the suitable points to
give the Lagrange remainder for the Taylor expansion and $F_{a}$
denotes the derivative for $K( \cdot ) ( \cdot )^{k}$ with respect
to the first variable for simplicity.

According to the Taylor expansion, we have
\begin{eqnarray*}
& ~ & \Big|\frac{1}{h}\Big(\sum_{i=1}^{n}
K\big(\frac{X_{t_{i-1}}-x}{h}\big)\big(\frac{X_{t_{i-1}}-x}{h}\big)^{k}\delta
- \int_{0}^{1}{{K\big(\frac{X_{s-} - x}{h}\big)}\big(\frac{X_{s-} -
x}{h}\big)^{k}}ds \Big)\Big|\\
& = & \frac{1}{h}\sum_{i =
1}^{n}\int_{t_{i-1}}^{t_{i}}{\Big|K\big(\frac{X_{t_{i-1}}-x}{h}\big)\cdot\big(\frac{X_{t_{i-1}}
- x}{h}\big)^{k} - {{K\big(\frac{X_{s-} -
x}{h}\big)}\big(\frac{X_{s-} - x}{h}\big)^{k}}\Big|}ds\\
& \leq & \frac{1}{h}\sum_{i =
1}^{n}\int_{t_{i-1}}^{t_{i}}\Big[\frac{|F_{a}(X_{0s},
J_{2s})|}{h_{n}}\sup_{u \in (t_{i-1}, t_{i}]}|X_{0,t_{i-1}} -
X_{0u}|\\ & & + \frac{|F_{ab}(X_{0s}, \tilde{J}_{2s})|}{h_{n}^{2}}\sup_{u
\in (t_{i-1}, t_{i}]}|X_{0,t_{i-1}} - X_{0u}| \cdot |J_{2, t_{i-1}}
- J_{2s}|\\
& ~ & + \frac{|F_{aa}(\tilde{X}_{0s}, {J}_{2s})|}{h_{n}^{2}} \sup_{u
\in (t_{i-1}, t_{i}]}|X_{0,t_{i-1}} - X_{0u}|^{2} \\ & &+
\frac{|F_{aab}(\tilde{X}_{0s}, \tilde{J}_{2s})|}{h_{n}^{3}}\sup_{u
\in (t_{i-1}, t_{i}]}|X_{0,t_{i-1}} - X_{0u}|^{2} \cdot |J_{2,
t_{i-1}} - J_{2s}|\\
& ~ & + \frac{|F_{b}(\tilde{X}_{0s}, \tilde{J}_{2s})|}{h_{n}} |J_{2,
t_{i-1}} - J_{2s}|\Big] ds\\
& := & \Pi_{1_{n, T}} + \Pi_{2_{n, T}} + \Pi_{3_{n, T}} + \Pi_{4_{n,
T}} + \Pi_{5_{n, T}}.
\end{eqnarray*}

\noindent We now show that the five terms give a negligible
contribution.

\noindent {\textbf{For $\Pi_{1_{n, T}}$}.

Using the UBI property
of $X_{0}$ and the occupation time formula, we get
\begin{eqnarray*}
& ~ & \frac{1}{h_{n}}\sum_{i = 1}^{n}\int_{t_{i-1}}^{t_{i}}
\frac{|F_{a}(X_{0s}, J_{2s})|}{h_{n}}\sup_{u \in (t_{i-1},
t_{i}]}|X_{0,t_{i-1}} - X_{0u}| ds\\
& \leq & C \frac{(\delta \ln(1/\delta))^{1/2}}{h_{n}}
\frac{1}{h_{n}} \int_{0}^{1}{|F_{a}(X_{0s}, J_{2s})|}ds\\
& = & C \frac{(\delta \ln(1/\delta))^{1/2}}{h_{n}}
\int_{\mathbb{R}}{|F_{a}(u)|\frac{L(u + h_{n}x)}{\sigma^{2}(u +
h_{n}x)}}du \stackrel{a.s.} \longrightarrow 0.
\end{eqnarray*}

\noindent {\textbf{For $\Pi_{2_{n, T}}$}.

Using the UBI property
of $X_{0}$, the boundedness of $F_{ab}(\cdot, \cdot)$ and $E
[|\tilde{J}_{2, s} - \tilde{J}_{2, t_{i-1}}|^{2}] = O (\delta)$ with
H\"{o}lder inequality for $s \in [t_{i - 1}, t_{i}]$ (one can refer
to this equation in the Proof of Theorem 4 for Mancini and Ren\`{o}
\cite{mr}), we have
\begin{eqnarray*}
& ~ & E \Big[\frac{1}{h_{n}}\sum_{i =
1}^{n}\int_{t_{i-1}}^{t_{i}}\frac{|F_{ab}(X_{0s},
\tilde{J}_{2s})|}{h_{n}^{2}}\sup_{u \in (t_{i-1},
t_{i}]}|X_{0,t_{i-1}} - X_{0u}| \cdot |J_{2, t_{i-1}} - J_{2s}| ds \Big]\\
& \leq & C E \Big[ \frac{1}{h_{n}}\sum_{i =
1}^{n}\int_{t_{i-1}}^{t_{i}}\frac{1}{h_{n}^{2}} (\delta
\ln(1/\delta))^{1/2} \cdot
|J_{2, t_{i-1}} - J_{2s}| ds \Big]\\
& = & C \frac{(\delta \ln(1/\delta))^{1/2}}{h_{n}^{2}}
\frac{1}{h_{n}}\sum_{i = 1}^{n}\int_{t_{i-1}}^{t_{i}} E [|J_{2,
t_{i-1}} - J_{2s}|] ds\\
& = & C \frac{(\delta \ln(1/\delta))^{1/2}}{h_{n}^{2}}
\frac{O(\sqrt{\delta})}{h_{n}} = O(1) \frac{\delta }{h_{n}^{3}}
(\ln(1/\delta))^{1/2} \longrightarrow 0,
\end{eqnarray*}
hence, it is shown that $\Pi_{2_{n, T}} \stackrel{p} \longrightarrow
0.$

\noindent {\textbf{For $\Pi_{3_{n, T}}$}.

\noindent Using the UBI property of $X_{0}$ and the occupation time
formula, we have
\begin{eqnarray*}
& ~ & \frac{1}{h_{n}}\sum_{i = 1}^{n}\int_{t_{i-1}}^{t_{i}}
\frac{|F_{aa}(\tilde{X}_{0s}, {J}_{2s})|}{h_{n}^{2}} \sup_{u \in
(t_{i-1}, t_{i}]}|X_{0,t_{i-1}} - X_{0u}|^{2}ds\\
& \leq & C \frac{\delta \ln(1/\delta)}{h_{n}^{2}}
\frac{1}{h_{n}}\sum_{i = 1}^{n}\int_{t_{i-1}}^{t_{i}}
|F_{aa}(\tilde{X}_{0s}, {J}_{2s})| ds\\
& = & C \frac{\delta \ln(1/\delta)}{h_{n}^{2}}
\int_{\mathbb{R}}{|F_{aa}(u)|\frac{L(u + h_{n}x)}{\sigma^{2}(u +
h_{n}x)}}du \stackrel{a.s.} \longrightarrow 0.
\end{eqnarray*}

\noindent {\textbf{For $\Pi_{4_{n, T}}$}.

Using the UBI property
of $X_{0}$, the boundedness of $F_{aab}(\cdot, \cdot)$ and $E
[|\tilde{J}_{2, s} - \tilde{J}_{2, t_{i-1}}|^{2}] = O(\delta)$ with
H\"{o}lder inequality for $s \in [t_{i - 1}, t_{i}]$, it can be
shown
\begin{eqnarray*}
& ~ & E \Big[\frac{1}{h_{n}}\sum_{i = 1}^{n}\int_{t_{i-1}}^{t_{i}}
\frac{|F_{aab}(\tilde{X}_{0s}, \tilde{J}_{2s})|}{h_{n}^{3}}\sup_{u
\in (t_{i-1}, t_{i}]}|X_{0,t_{i-1}} - X_{0u}|^{2} \cdot |J_{2,
t_{i-1}} - J_{2s}| ds \Big]\\
& \leq & C E\Big[ \frac{1}{h_{n}}\sum_{i =
1}^{n}\int_{t_{i-1}}^{t_{i}} \frac{1}{h_{n}^{3}} \cdot \delta
\ln(1/\delta) \cdot |J_{2, t_{i-1}} - J_{2s}| ds \Big]\\
& = & C \frac{\delta \ln(1/\delta)}{h_{n}^{3}}
\frac{1}{h_{n}}\sum_{i = 1}^{n}\int_{t_{i-1}}^{t_{i}} E [|J_{2,
t_{i-1}} - J_{2s}|] ds\\
& = & C \frac{\delta \ln(1/\delta)}{h_{n}^{3}}
\frac{\sqrt{\delta}}{h_{n}} = O(1) \frac{(\delta)^{3/2}}{h_{n}^{4}}
\ln(1/\delta) \longrightarrow 0,
\end{eqnarray*}
so we have prove $\Pi_{4_{n, T}} \stackrel{p} \longrightarrow 0.$

\noindent {\textbf{For $\Pi_{5_{n, T}}$}.

Using the boundedness
of $F_{b}(\cdot, \cdot)$ and $E [|\tilde{J}_{2, s} - \tilde{J}_{2,
t_{i-1}}|^{2}] = O(\delta)$ with H\"{o}lder inequality for $s \in
[t_{i - 1}, t_{i}]$, we can prove
\begin{eqnarray*}
& ~ & E \Big[ \frac{1}{h_{n}}\sum_{i = 1}^{n}\int_{t_{i-1}}^{t_{i}}
\frac{|F_{b}(\tilde{X}_{0s}, \tilde{J}_{2s})|}{h_{n}} |J_{2,
t_{i-1}} - J_{2s}| ds \Big]\\
& \leq & C \frac{1}{h_{n}^{2}} \sum_{i =
1}^{n}\int_{t_{i-1}}^{t_{i}} E
[|J_{2, t_{i-1}} - J_{2s}|] ds\\
& = & O(1) \frac{\sqrt{\delta}}{h_{n}^{2}} \longrightarrow 0,
\end{eqnarray*}
so we have $\Pi_{5_{n, T}} \stackrel{p} \longrightarrow 0.$

From the above five parts, we get
$$\frac{1}{h}\Big(\sum_{i=1}^{n}
K\big(\frac{X_{t_{i-1}}-x}{h}\big)\big(\frac{X_{t_{i-1}}-x}{h}\big)^{k}\delta
- \int_{0}^{1}{{K\big(\frac{X_{s-} - x}{h}\big)}\big(\frac{X_{s-} -
x}{h}\big)^{k}}ds \Big) \stackrel{p}{\rightarrow} 0,$$ so we have
$$\frac{1}{h}\sum_{i=1}^{n}K\big(\frac{X_{t_{i-1}}-x}{h}\big)\big(\frac{X_{t_{i-1}}-x}{h}\big)^{k}\delta~
\stackrel{p}{\longrightarrow}~\frac{K_{1}^{k}L_{X}(T,x)}{\sigma^2(x)}$$
with the result $\frac{1}{h}\int_{0}^{1}{{K\big(\frac{X_{s-} -
x}{h}\big)}\big(\frac{X_{s-} - x}{h}\big)^{k}}ds \stackrel{a.s.}
\longrightarrow \frac{K_{1}^{k}L_{X}(T,x)}{\sigma^2(x)}$ in the
detailed proof of Lemma \ref{l3}
\bigskip

For the consistency and asymptotic normality for
$\hat{\sigma}^{2}_{n}(x),$ we follow the same procedures as that in
the detailed proof of Theorem \ref{th1}, it is sufficient to check
that the contribution given by the IA jumps $\tilde{J}_{2}$ is
negligible at each step based on the result of Theorem \ref{th1} for
FA case. Write
\begin{eqnarray*}
& ~ & \sqrt{nh}(\hat{\sigma}_{n}^{2}(x) - \sigma^{2}(x))\\ & = &
\frac{\sqrt{\frac{n}{h}}\sum_{i=1}^{n}K(\frac{X_{t_{i-1}}-x}{h})\{\frac{\delta
S_{n,2}}{h^2}-(\frac{X_{t_{i-1}}-x}{h})\frac{\delta
S_{n,1}}{h}\}\big((\Delta_{i}X)^{2} I_{\{(\Delta_{i}X)^{2}
 \leq \vartheta(\delta_{k})\}} -
\sigma^{2}(x)\delta\big)}{\frac{1}{h}\sum_{i=1}^{n}K(\frac{X_{t_{i-1}}-x}{h})\{\frac{\delta
S_{n,2}}{h^2}-(\frac{X_{t_{i-1}}-x}{h})\frac{\delta
S_{n,1}}{h}\}\delta}.
\end{eqnarray*}
To prove the result it is sufficient to show that
the numerator tends stably in law to random variable $M_{1}.$

Recall the following fact,  if $Z_{n} \stackrel{\mathcal {S} - \mathcal {L}}
\longrightarrow Z $ and if $Y_{n}$ and $Y$ are variables defined on
$(\Omega, \mathcal {F}, \mathbb{P})$ and with values in the same
Polish space F, then
\begin{equation}
\label{4.10}
Y_{n} \stackrel{P}
\longrightarrow Y~~~~~\Rightarrow~~~~~(Y_{n}, ~Z_{n})
\stackrel{\mathcal {S} - \mathcal {L}} \longrightarrow (Y, ~Z),$$
which implies that $Y_{n} + Z_{n} \stackrel{\mathcal {S} - \mathcal
{L}} \longrightarrow Y + Z.$ Hence, we need to prove that
$$\sqrt{\frac{n}{h}}\sum_{i=1}^{n}K(\frac{X_{t_{i-1}}-x}{h})\{\frac{\delta
S_{n,2}}{h^2}-(\frac{X_{t_{i-1}}-x}{h})\frac{\delta
S_{n,1}}{h}\}\Big((\Delta_{i}X)^{2}
I_{\{(\Delta_{i}\tilde{J_{2}})^{2} \leq
4\vartheta(\delta_{k}),~\Delta_{i}N = 0\}} -
\sigma^{2}(x)\delta\Big) \stackrel{s.t.} \longrightarrow M_{1}.\end{equation}

\noindent For (\ref{4.10}), we have
\begin{eqnarray*}
& ~ &
\sqrt{\frac{n}{h}}\sum_{i=1}^{n}K(\frac{X_{t_{i-1}}-x}{h})\{\frac{\delta
S_{n,2}}{h^2}-(\frac{X_{t_{i-1}}-x}{h})\frac{\delta
S_{n,1}}{h}\}\Big((\Delta_{i}X)^{2}
I_{\{(\Delta_{i}\tilde{J_{2}})^{2} \leq
4\vartheta(\delta_{k}),~\Delta_{i}N = 0\}} -
\sigma^{2}(x)\delta\Big)\\
& = &
\sqrt{\frac{n}{h}}\sum_{i=1}^{n}K(\frac{X_{t_{i-1}}-x}{h})\{\frac{\delta
S_{n,2}}{h^2}-(\frac{X_{t_{i-1}}-x}{h})\frac{\delta S_{n,1}}{h}\}
\times \\
& ~ & \times \Big((\Delta_{i}Y + \Delta_{i}\tilde{J}_{2})^{2}
I_{\{(\Delta_{i}\tilde{J_{2}})^{2} \leq
4\vartheta(\delta_{k}),~\Delta_{i}N = 0\}} -
\sigma^{2}(x)\delta\Big)\\
& = &
\sqrt{\frac{n}{h}}\sum_{i=1}^{n}K(\frac{X_{t_{i-1}}-x}{h})\{\frac{\delta
S_{n,2}}{h^2}-(\frac{X_{t_{i-1}}-x}{h})\frac{\delta
S_{n,1}}{h}\}\Big((\Delta_{i}Y)^{2} - \sigma^{2}(x)\delta\Big)\\
& ~ & -
\sqrt{\frac{n}{h}}\sum_{i=1}^{n}K(\frac{X_{t_{i-1}}-x}{h})\{\frac{\delta
S_{n,2}}{h^2}-(\frac{X_{t_{i-1}}-x}{h})\frac{\delta
S_{n,1}}{h}\}(\Delta_{i}Y)^{2} I_{\{(\Delta_{i}\tilde{J_{2}})^{2}
> 4\vartheta(\delta_{k})\} \bigcup \{\Delta_{i}N \neq 0\}}\\
& ~ & + 2
\sqrt{\frac{n}{h}}\sum_{i=1}^{n}K(\frac{X_{t_{i-1}}-x}{h})\{\frac{\delta
S_{n,2}}{h^2}-(\frac{X_{t_{i-1}}-x}{h})\frac{\delta S_{n,1}}{h}\}
\Delta_{i}Y \Delta_{i}\tilde{J}_{2}
I_{\{(\Delta_{i}\tilde{J_{2}})^{2} \leq
4\vartheta(\delta_{k}),~\Delta_{i}N = 0\}}\\
& ~ & +
\sqrt{\frac{n}{h}}\sum_{i=1}^{n}K(\frac{X_{t_{i-1}}-x}{h})\{\frac{\delta
S_{n,2}}{h^2}-(\frac{X_{t_{i-1}}-x}{h})\frac{\delta S_{n,1}}{h}\}
(\Delta_{i}\tilde{J}_{2})^{2} I_{\{(\Delta_{i}\tilde{J_{2}})^{2}
\leq 4\vartheta(\delta_{k}),~\Delta_{i}N = 0\}}\\
& := & \sum_{l = 1}^{4} \mathcal {S}_{l}.
\end{eqnarray*}
From the detailed proof for $X_{t}$ with finite activity jump (FA
case) in Theorem \ref{th1}, we have shown that
$$\mathcal
{S}_{1} - \sqrt{nh_{n}} \cdot
\frac{1}{2}(\sigma^{2})^{''}(x)[(K_{1}^{2})^{2} -
K_{1}^{1}K_{1}^{3}] \cdot h_{n}^{2} \stackrel{\mathcal {S} -
\mathcal {L}} \longrightarrow M_{1},$$ where $M_{1}$ is a Gaussian
martingale defined on an extension $\big(\tilde{\Omega} , \tilde{P}
, \tilde{\mathscr{F}}\big)$ of our filtered probability space and
having $\tilde{E}[M_{1}^{2}|\mathscr{F}] =
\frac{2}{\sigma^{2}(x)}\cdot L_{X}^{3}(T,x)\cdot V_{x}^{'}$ with
$V_{x} = K_{2}^{0}\cdot\big(K_{1}^{2}\big)^{2} +
K_{2}^{2}\cdot\big(K_{1}^{1}\big)^{2} - 2K_{2}^{1}\cdot
K_{1}^{2}\cdot K_{1}^{1}.$
\medskip

In the following part, we will verify the fact that $\mathcal
{S}_{1}$ converges stably to $M_{1}$, the other terms tend to zero
in probability for $X_{t}$ with infinite activity jump (IA case)
similarly as the result in Theorem \ref{th1}, that is, the contribution
given by infinite activity jumps can be negligible.
\medskip

\noindent {\textbf{For $\mathcal {S}_{1}$}.

It consists of two terms $$\sum_{i=1}^{n}q_{i} +
\sqrt{\frac{n}{h}}\sum_{i=1}^{n}K(\frac{X_{t_{i-1}}-x}{h})\{\frac{\delta
S_{n,2}}{h^2}-(\frac{X_{t_{i-1}}-x}{h})\frac{\delta
S_{n,1}}{h}\}\int_{t_{i-1}}^{t_{i}}{(\sigma^{2}_{s} -
\sigma^{2}(x))}ds.$$

For the first term $\sum_{i = 1}^{n}q_{i} = L_{T}(T, x)\sum_{i =
1}^{n}q_{i}^{'}$, $\sum_{i = 1}^{n}q_{i}^{'}$ is composed of four
parts such as $S_{1},~S_{2},~S_{3},~S_{4}$.

$S_{1},~S_{3},~S_{4}$ these three parts can be dealt as the FA jumps
case in Theorem \ref{th1}. For $S_{2},$ in $D1,~D3,$ we similarly
expand $\sigma^{2}_{s},~ \sigma^{4}_{s}$ up to the first order
respectively. Using the H\"{o}lder, BDG inequality and the IA jump
component contribution with $E [|\tilde{J}_{2, s} - \tilde{J}_{2,
t_{i-1}}|^{2}] = O(\delta)$, we can obtain the convergence of
$D1,~D3$ to 0 in probability. $D2,~D5$ can be proved with the
similar procedure as that in Theorem \ref{th1} using the occupation
time formula. $D3$ can be dealt by the similar steps.

\noindent {\textbf{For $\mathcal {S}_{2}$}.

The sum of the terms with $\Delta_{i}N \neq 0$ is
$O_{p}(\sqrt{\frac{\delta}{h_{n}}}) \rightarrow 0$ by  Lemma \ref{l2}.
For the sum of the terms with $(\Delta_{i}\tilde{J_{2}})^{2}
> 4\vartheta(\delta_{k}),$
 we consider
\begin{eqnarray*}
\frac{\mathcal {S}_{2}}{L_{X}(T, x)} & \stackrel{p} \longrightarrow
& \sqrt{\frac{n}{h}}\sum_{i = 1}^{n}K(\frac{X_{t_{i-1}}-x}{h})\Big\{
\frac{K_{1}^{2}}{\sigma^{2}(x)} - \frac{X_{t_{i-1}}-x}{h} \cdot
\frac{K_{1}^{1}}{\sigma^{2}(x)}\Big\} (\Delta_{i}Y)^{2}
I_{(\Delta_{i}\tilde{J_{2}})^{2}
> 4\vartheta(\delta_{k})}\\ & := & \mathcal {S}_{2}^{'}.
\end{eqnarray*}

\noindent For $\mathcal {S}_{2}^{'}$, we have
\begin{eqnarray*}
& &E_{i - 1}[|\mathcal {S}_{2}^{'}|] \\& \leq & \sqrt{\frac{n}{h}}\sum_{i
= 1}^{n}K(\frac{X_{t_{i-1}}-x}{h})\Big\{
\frac{K_{1}^{2}}{\sigma^{2}(x)} + sign(K_{1}^{1})
\frac{X_{t_{i-1}}-x}{h} \cdot \frac{K_{1}^{1}}{\sigma^{2}(x)}\Big\}
E_{i - 1} \big[(\Delta_{i}Y)^{2} I_{(\Delta_{i}\tilde{J_{2}})^{2}
> 4\vartheta(\delta_{k})}\big]\\
& \leq & \sqrt{\frac{n}{h}}\sum_{i =
1}^{n}K(\frac{X_{t_{i-1}}-x}{h})\Big\{
\frac{K_{1}^{2}}{\sigma^{2}(x)} + sign(K_{1}^{1})
\frac{X_{t_{i-1}}-x}{h} \cdot \frac{K_{1}^{1}}{\sigma^{2}(x)}\Big\}
\times \\
& ~ & \times E_{i - 1}^{1/p} \big[(\Delta_{i}Y)^{2p} \big] P_{i -
1}^{1/q}\{(\Delta_{i}\tilde{J_{2}})^{2}
> 4\vartheta(\delta_{k})\}\\
& \leq & \delta^{- 1/2 + \phi/2} \cdot \frac{1}{h} \sum_{i =
1}^{n}K(\frac{X_{t_{i-1}}-x}{h})\Big\{
\frac{K_{1}^{2}}{\sigma^{2}(x)} + sign(K_{1}^{1})
\frac{X_{t_{i-1}}-x}{h} \cdot \frac{K_{1}^{1}}{\sigma^{2}(x)}\Big\}
C_{p} \delta \cdot \delta^{(1 - \alpha\eta)1/q}\\
& = & C \delta^{(1 - \alpha\eta)1/q - 1/2 + \phi/2} \cdot
\frac{1}{h} \sum_{i = 1}^{n}K(\frac{X_{t_{i-1}}-x}{h})\Big\{
\frac{K_{1}^{2}}{\sigma^{2}(x)} + sign(K_{1}^{1})
\frac{X_{t_{i-1}}-x}{h} \cdot \frac{K_{1}^{1}}{\sigma^{2}(x)}\Big\}
\delta\\
& \rightarrow & C \delta^{(1 - \alpha\eta)1/q - 1/2 + \phi/2} \cdot
\Big[\frac{L_{X}(T, x)}{\sigma^{2}(x)} \cdot
\frac{K_{1}^{2}}{\sigma^{2}(x)} + sign(K_{1}^{1}) \cdot
\frac{L_{X}(T, x) K_{1}^{2}}{\sigma^{2}(x)} \cdot
\frac{K_{1}^{1}}{\sigma^{2}(x)}\Big]~(in~probability )\\
& = & O\big(\delta^{(1 - \alpha\eta)1/q - 1/2 + \phi/2}\big)
\longrightarrow 0,
\end{eqnarray*}
using the H\"{o}lder inequality with $q$ close to 1, the BDG
inequality. Hence we prove the convergence of
$\mathcal {S}_{2}$ to 0 in probability.

\noindent {\textbf{For $\mathcal {S}_{3}$},

We consider
\begin{eqnarray*}
\frac{\mathcal {S}_{3}}{L_{X}(T, x)} & \stackrel{p} \longrightarrow
& \sqrt{\frac{n}{h}}\sum_{i = 1}^{n}K(\frac{X_{t_{i-1}}-x}{h})\Big\{
\frac{K_{1}^{2}}{\sigma^{2}(x)} - \frac{X_{t_{i-1}}-x}{h} \cdot
\frac{K_{1}^{1}}{\sigma^{2}(x)}\Big\} \Delta_{i}Y
\Delta_{i}\tilde{J}_{2} I_{\{(\Delta_{i}\tilde{J_{2}})^{2} \leq
4\vartheta(\delta_{k}),~\Delta_{i}N = 0\}}\\
& := & \mathcal{S}_{3}^{'}.
\end{eqnarray*}

\noindent For $\mathcal {S}_{3}^{'}$, we have
\begin{eqnarray*}
& ~ & E_{i - 1}[|\mathcal {S}_{3}^{'}|]\\
 & \leq &
\sqrt{\frac{n}{h}}\sum_{i = 1}^{n}K(\frac{X_{t_{i-1}}-x}{h})\Big\{
\frac{K_{1}^{2}}{\sigma^{2}(x)} + sign(K_{1}^{1})
\frac{X_{t_{i-1}}-x}{h} \cdot \frac{K_{1}^{1}}{\sigma^{2}(x)}\Big\}
E_{i - 1} \big[ |\Delta_{i}Y \Delta_{i}\tilde{J}_{2}|
I_{\{(\Delta_{i}\tilde{J_{2}})^{2} \leq 4\vartheta(\delta_{k})\}}
\big]\\
& \leq & \sqrt{\frac{n}{h}}\sum_{i =
1}^{n}K(\frac{X_{t_{i-1}}-x}{h})\Big\{
\frac{K_{1}^{2}}{\sigma^{2}(x)} + sign(K_{1}^{1})
\frac{X_{t_{i-1}}-x}{h} \cdot \frac{K_{1}^{1}}{\sigma^{2}(x)}\Big\}
\times \\
& ~ & \times E_{i - 1}^{1/p} \big[ (\Delta_{i}Y)^{p} \big] E_{i -
1}^{1/q} \big[
(\Delta_{i}\tilde{J}_{2})^{q}I_{\{(\Delta_{i}\tilde{J_{2}})^{2} \leq
4\vartheta(\delta_{k})\}} \big]\\
& \leq & \delta^{- 1/2 + \phi/2} \cdot \frac{1}{h} \sum_{i =
1}^{n}K(\frac{X_{t_{i-1}}-x}{h})\Big\{
\frac{K_{1}^{2}}{\sigma^{2}(x)} + sign(K_{1}^{1})
\frac{X_{t_{i-1}}-x}{h} \cdot \frac{K_{1}^{1}}{\sigma^{2}(x)}\Big\}
\cdot \delta^{1/p} \cdot \delta^{1/q + \eta/q(1 -
\frac{\alpha}{2})}\\
& = & \delta^{- 1/2 + \phi/2 + \eta/q(1 - \frac{\alpha}{2})} \cdot
\frac{1}{h} \sum_{i = 1}^{n}K(\frac{X_{t_{i-1}}-x}{h})\Big\{
\frac{K_{1}^{2}}{\sigma^{2}(x)} + sign(K_{1}^{1})
\frac{X_{t_{i-1}}-x}{h} \cdot
\frac{K_{1}^{1}}{\sigma^{2}(x)}\Big\}\delta\\
 & \rightarrow &
\delta^{- 1/2 + \phi/2 + \eta/q(1 - \frac{\alpha}{2})} \cdot
\Big[\frac{L_{X}(T, x)}{\sigma^{2}(x)} \cdot
\frac{K_{1}^{2}}{\sigma^{2}(x)} + sign(K_{1}^{1}) \cdot
\frac{L_{X}(T, x) K_{1}^{2}}{\sigma^{2}(x)} \cdot
\frac{K_{1}^{1}}{\sigma^{2}(x)}\Big]\\
& = & O\big(\delta^{- 1/2 + \phi/2 + \eta/q(1 -
\frac{\alpha}{2})}\big) \longrightarrow 0,
\end{eqnarray*}
using H\"{o}lder inequality with $q$ close to 1 andthe BDG inequality.
Hence,
we prove the convergence of $\mathcal {S}_{3}$ to 0 in probability.

\noindent {\textbf{For $\mathcal {S}_{4}$}, We consider
\begin{eqnarray*}
\frac{\mathcal {S}_{4}}{L_{X}(T, x)} & \stackrel{p} \longrightarrow
& \sqrt{\frac{n}{h}}\sum_{i = 1}^{n}K(\frac{X_{t_{i-1}}-x}{h})\Big\{
\frac{K_{1}^{2}}{\sigma^{2}(x)} - \frac{X_{t_{i-1}}-x}{h} \cdot
\frac{K_{1}^{1}}{\sigma^{2}(x)}\Big\} (\Delta_{i}\tilde{J}_{2})^{2}
I_{\{(\Delta_{i}\tilde{J_{2}})^{2} \leq
4\vartheta(\delta_{k}),~\Delta_{i}N = 0\}}\\ & := & \mathcal
{S}_{4}^{'}.
\end{eqnarray*}

\noindent For $\mathcal {S}_{4}^{'}$, we have
\begin{eqnarray*}
E_{i - 1}[|\mathcal {S}_{4}^{'}|] & \leq & \sqrt{\frac{n}{h}}\sum_{i
= 1}^{n}K(\frac{X_{t_{i-1}}-x}{h})\Big\{
\frac{K_{1}^{2}}{\sigma^{2}(x)} + sign(K_{1}^{1})
\frac{X_{t_{i-1}}-x}{h} \cdot \frac{K_{1}^{1}}{\sigma^{2}(x)}\Big\}
\times \\
& ~ & \times E_{i - 1} \big[ (\Delta_{i}\tilde{J}_{2})^{2}
I_{\{(\Delta_{i}\tilde{J_{2}})^{2} \leq
4\vartheta(\delta_{k})\}} \big]\\
& = & \delta^{- 1/2 + \phi/2} \cdot \frac{1}{h} \sum_{i =
1}^{n}K(\frac{X_{t_{i-1}}-x}{h})\Big\{
\frac{K_{1}^{2}}{\sigma^{2}(x)} + sign(K_{1}^{1})
\frac{X_{t_{i-1}}-x}{h} \cdot \frac{K_{1}^{1}}{\sigma^{2}(x)}\Big\}
\cdot \delta^{1 + \eta(1 - \frac{\alpha}{2})}\\
& = & \delta^{- 1/2 + \phi/2 + \eta(1 - \frac{\alpha}{2})} \cdot
\frac{1}{h} \sum_{i = 1}^{n}K(\frac{X_{t_{i-1}}-x}{h})\Big\{
\frac{K_{1}^{2}}{\sigma^{2}(x)} + sign(K_{1}^{1})
\frac{X_{t_{i-1}}-x}{h} \cdot \frac{K_{1}^{1}}{\sigma^{2}(x)}\Big\}
\delta\\
& \rightarrow & \delta^{- 1/2 + \phi/2 + \eta(1 - \frac{\alpha}{2})}
\cdot \Big[\frac{L_{X}(T, x)}{\sigma^{2}(x)} \cdot
\frac{K_{1}^{2}}{\sigma^{2}(x)} + sign(K_{1}^{1}) \cdot
\frac{L_{X}(T, x) K_{1}^{2}}{\sigma^{2}(x)} \cdot
\frac{K_{1}^{1}}{\sigma^{2}(x)}\Big]\\
& = & O\big(\delta^{- 1/2 + \phi/2 + \eta(1 -
\frac{\alpha}{2})}\big) \longrightarrow 0.
\end{eqnarray*}

Hence we prove the convergence of $\mathcal {S}_{4}$ to 0 in
probability.

We complete the proof for Theorem 2.

\end{document}